\documentclass[reqno,11pt,oneside]{amsart}
\usepackage[utf8]{inputenc}
\usepackage{amsmath}
\usepackage{amsfonts}
\usepackage{amssymb}
\usepackage{graphicx}
\usepackage{verbatim}
\usepackage{mathrsfs}
\usepackage{upref,amsthm,amsxtra,exscale}
\usepackage{cite}
\usepackage{dsfont}
\usepackage[
  left=3.5cm,
  right=2.5cm,
  top=4cm,
  bottom=4cm
]{geometry}
\usepackage[colorlinks=true,urlcolor=blue,
citecolor=red,linkcolor=blue,linktocpage,pdfpagelabels,
bookmarksnumbered,bookmarksopen]{hyperref}

\usepackage{fullpage}

\newtheorem{theorem}{Theorem}[section]

\newtheorem{lemma}[theorem]{Lemma}

\numberwithin{equation}{section}

\def\bn{\mathbb{B}^n}
\def\D{\mathbb{B}^n_+}
\def\r{\mathbb{R}}
\def\rn{\mathbb{R}^n}
\def\hn{\mathbb{R}^n_+}
\def\z{\mathbb{Z}}
\def\z2{\mathbb{Z}_2}

\def\sn{\mathbb{S}^n}
\def\n{\mathbb{N}}
\def\cc{\mathbb{C}}
\def\eps{\varepsilon}
\def\rh{\rightharpoonup}

\def\irn{\int_{\rn}}
\def\ihn{\int_{\hn}}
\def\idhn{\int_{\partial\hn}}
\def\vp{\varphi}

\def\vr{\varrho}

\def\cC{\mathcal{C}}

\def\cN{\mathcal{N}}

\def\supp{\mathrm{supp}}
\def\what{\widehat}
\def\tilde{\widetilde}

\def\dist{\mathrm{dist}}

\begin{document}

\title[Sign-changing solutions to the Yamabe problem on a spherical cap]
{Sign-changing solutions to the  Yamabe problem on a spherical cap}

\author{Mónica Clapp}
\address[Mónica Clapp]{Instituto de Matemáticas,
Universidad Nacional Autónoma de México,
Campus Juriquilla,
76230 Querétaro, Qro., Mexico}
\email{monica.clapp@im.unam.mx}

\author{Benedetta Pellacci}
\address[Benedetta Pellacci]{Dipartimento di Matematica e Fisica,
Universit\`a della Campania ``Luigi Vanvitelli'',
Viale Lincoln 5,
81100 Caserta, Italy}
\email{benedetta.pellacci@unicampania.it}

\author{Angela Pistoia}
\address[Angela Pistoia]{Dipartimento SBAI, Sapienza Universit\`a di Roma,
via Antonio Scarpa 16, 00161 Roma, Italy }
\email{angela.pistoia@uniroma1.it}

\thanks{A. Pistoia  is partially supported by  the MUR-PRIN-20227HX33Z
  ``Pattern formation in nonlinear phenomena'' and  partially by
  INDAM-GNAMPA project ``Problemi di doppia curvatura su variet\`a a
  bordo e legami con le EDP di tipo ellittico''. 
 B. Pellacci is partially supported by  the MUR-PRIN-20227HX33Z
  ``Pattern formation in nonlinear phenomena'' and  partially by
  INDAM-GNAMPA project ``Problemi di ottimizzazione in PDEs da modelli biologici''.}

\subjclass[2010]{53C21, 35J60, 58J32, 58J70}
\keywords{Escobar problem, Yamabe problem, spherical cap, nodal solutions, conformal geometry, symmetries, concentration compactness, variational methods.
}

\begin{abstract}
Spherical caps play a crucial role in establishing a criterion for the existence of solutions to the Yamabe problem on a compact Riemannian manifold with boundary, similar to the role played by the standard sphere in the problem on a closed Riemannian manifold. This problem is expressed in terms of a nonlinear boundary-value problem, where both the nonlinearity and the boundary condition are critical in the Sobolev sense.

This work focuses on the existence of multiple solutions to the Yamabe problem on spherical caps. We show that if the spherical cap is contained in a hemisphere of the standard $n$-sphere and $n = 5$ or $n \geq 7$, the Yamabe problem has infinitely many sign-changing solutions.

Our approach takes advantage of symmetries and is based on a careful analysis of the loss of compactness of the variational problem.
\end{abstract}

\maketitle

\section{Introduction}

In this paper we study the critical problem
\begin{equation}\label{problem_cap}
\begin{cases}
 -\Delta u = a_n|u|^\frac{4}{n-2}u &\text{in \ } \hn,\\
 \frac{\partial u}{\partial x_n} = -b|u|^\frac{2}{n-2}u &\text{on \ }\partial\hn,
\end{cases}
\end{equation}
where $\hn:=\{(x_1,\ldots,x_n)\in\rn:x_n>0\}$, $a_n:=n(n-2)$, $n\geq 3$ and $b\in\r$.

This problem plays a crucial role in the question of whether a given compact Riemannian manifold $(M,g)$ with boundary has a metric conformally equivalent to $g$ that has constant scalar curvature in $M$ and for which the boundary has constant mean curvature. For a positive manifold $M$ this is equivalent to the existence of a positive solution to the Yamabe problem
\begin{equation}\label{yamabe problem}
\begin{cases}
 -\Delta_g u + c_nR_gu= a_n|u|^\frac{4}{n-2}u &\text{on \ } M,\\
 \frac{\partial u}{\partial\nu_g} + d_nh_gu= b|u|^\frac{2}{n-2}u &\text{on \ }\partial M,
\end{cases}
\end{equation}
where $\Delta_g$ is the Laplacian, $R_g$ is the scalar curvature of $M$, $h_g$ is the mean curvature of its boundary and $\nu_g$ is the outward unit normal on $\partial M$ with respect to the metric $g$, $c_n:=\frac{n-2}{4(n-1)}$ and  $d_n:=\frac{n-2}{2}$. A positive solution $u$ to \eqref{yamabe problem} gives rise to a conformal metric $\tilde{g}:=u^\frac{4}{n-2}g$ with the desired properties. In \cite{e1,e4} Escobar established the existence of a positive solution to \eqref{yamabe problem} for a large class of manifolds if $|b|$ is sufficiently small. Subsequently, Han and Li showed in \cite{hl0,hl} that, in many cases a positive solution exists for every $b\in\r$.

In \cite{hl} Han and Li gave a condition for the existence of a positive solution to \eqref{yamabe problem}, similar to that established by Aubin in \cite{aubin} for the Yamabe problem on a closed Riemannian manifold. The role of the standard sphere in Aubin's condition is played by the the spherical cap
$$B_b:=\pi_b^{-1}(\hn),$$
given by the stereographic projection $\pi_b$ from the north pole $N_b:=(0,\ldots,0,-\frac{b}{n-2},1)$ of the unit sphere in $\r^{n+1}$ centered at $\xi_b:=(0,\ldots,0,-\frac{b}{n-2},0)$; see Figure \ref{fig}.  
\begin{figure}[htbp]
        \centering
        \includegraphics[width=0.70\textwidth]{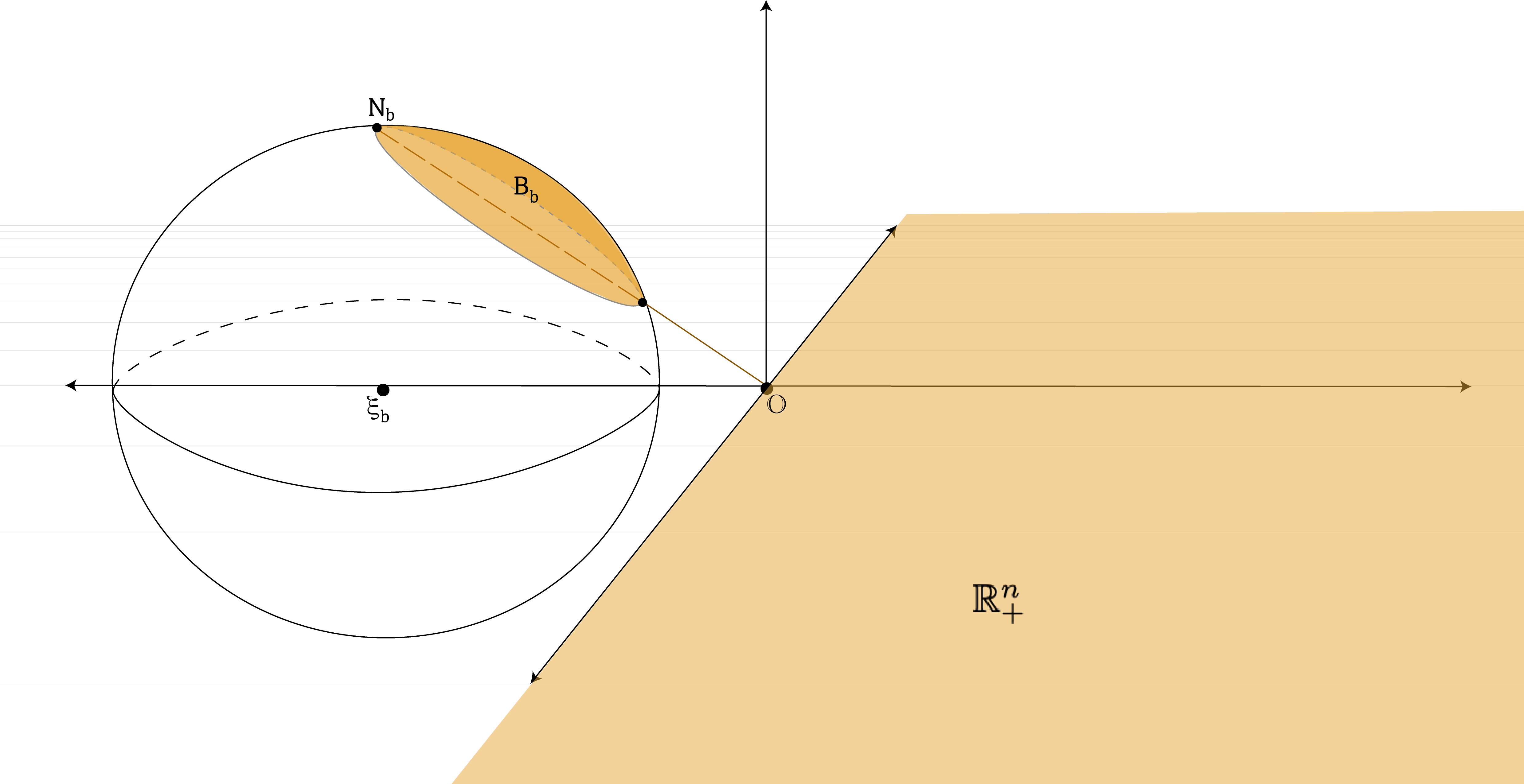}
        \caption{Spherical cap}
        \label{fig}
  \end{figure}
The spherical cap plays also a crucial role on the existence of a least energy sign-changing solution to \eqref{yamabe problem}, as was recently shown in \cite{cpp}. 

The stereographic projection $\pi_b$ establishes a one-to-one correspondence between the solutions of \eqref{problem_cap} and those of problem \eqref{yamabe problem} on $B_b$. The positive solutions to \eqref{problem_cap} are the \emph{bubble} 
$$U(x):=\left(\frac{1}{1+x_1^2+\cdots+x_{n-1}^2+(x_n+\frac{b}{n-2})^2}\right)^{(n-2)/2},\qquad x=(x_1,\ldots,x_n)\in\overline{\hn},$$
and its dilations $U_\eps(x):=\eps^\frac{2-n}{2}U\big(\frac{x}{\eps}\big)$, $\eps>0$. It is easy to see that \eqref{problem_cap} does not have a least energy sign-changing solution. Therefore, it is natural to ask whether it has higher energy sign-changing solutions. As far as we know, this question has not yet been answered.

Many results concerning the existence of nodal solutions to the Yamabe equation
\begin{equation}\label{yamabe rn}
-\Delta u = a_n|u|^\frac{4}{n-2}u \qquad\text{in \ } \rn
\end{equation}
in the whole Euclidean space are already well known. The first one was obtained by W.Y. Ding in \cite{ding}, where he established the existence of infinitely many sign-changing solutions taking advantage of the fact that there are groups of isometries on the sphere $\sn$ whose orbits are all infinite. These isometries translate into conformal diffeomorphisms of $\rn$ through the stereographic projection. Other types of nodal solutions to \eqref{yamabe rn} were subsequently exhibited in \cite{dmpp1,dmpp2,c,cfs,fp,mm}.

Ding's method cannot be used for our problem because the spherical caps are not invariant under the action of the groups that he considered. In \cite{dmpp1,dmpp2} del Pino, Musso, Pacard and Pistoia obtained another type of solutions using the Lyapunov-Schmidt reduction method. Their approach takes advantage of the invariance of \eqref{yamabe rn} under the Kelvin transform, which is not available for problem \eqref{problem_cap}.

However, as we will explain below, it is possible to adapt some ideas used in \cite{c} to prove the following result.

\begin{theorem} \label{thm:main_cap}
If $n=5$ or $n\geq 7$, then there exists a decreasing sequence $(\beta_k)$ of positive numbers such that, if $b\geq -\beta_k$, the problem \eqref{problem_cap} has at least $k$ different sign-changing solutions.

In particular, \eqref{problem_cap} has infinitely many sign-changing solutions for every $b\geq 0$.
\end{theorem}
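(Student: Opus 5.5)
We prove Theorem~\ref{thm:main_cap} by following the symmetry-plus-compactness scheme of \cite{c}. The solutions of \eqref{problem_cap} are critical points of
$$J_b(u)=\frac12\ihn|\nabla u|^2-\frac{a_n(n-2)}{2n}\ihn|u|^{\frac{2n}{n-2}}-\frac{b(n-2)}{2(n-1)}\idhn|u|^{\frac{2(n-1)}{n-2}}$$
on $D^{1,2}(\hn)$.

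\textbf{Symmetries.} We write $\rn=\r^{n-1}\times\r$. Identify $\r^{n-1}$ with $\cc^m\times\r^{n-1-2m}$ or $\cc^{m}\times\cc^m\times\r^{n-1-4m}$, and let a group $G=\Gamma\subset O(n-1)$ act on the first $n-1$ variables. The group $\Gamma$ should be generated by $S^1$ (or a large finite cyclic group $\z_k$), acting diagonally on the complex factors, together with an involution $\tau$ that exchanges two blocks. Attached to $\tau$ is a homomorphism $\phi:G\to\z_2$, and we look for $\phi$-equivariant functions, $u(gx)=\phi(g)u(x)$. Every nontrivial such function changes sign.

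The dimension restriction ($n=5$ or $n\ge7$, i.e.\ $n-1=4$ or $n-1\ge6$) is precisely what is needed to choose $\Gamma$ so that every $G$-orbit in $\r^{n-1}\setminus\{0\}$ has positive dimension (or cardinality at least $k$), while the fixed set $\{0\}\times\r$ is one-dimensional. On that fixed set $\phi$-equivariance forces $u=0$, because $\tau$ acts trivially there.

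\textbf{Compactness analysis.} We study the Nehari manifold $\cN^\phi_b$ of $\phi$-equivariant functions and its infimum $c_b^\phi$. Next we prove a profile decomposition for Palais--Smale sequences in this symmetric class. Bubbles can concentrate either at a point of $\partial\hn$, where the limit profile is a sign-changing solution of the half-space problem, or at an interior point, where the limit is a solution of \eqref{yamabe rn}. Because of the symmetry, any concentration point off the axis $\{0\}\times\r$ carries an infinite orbit (or an orbit of $k$ points), so its energy is infinite (resp.\ at least $k$ times a fixed quantity).

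On the axis, the function is odd under $\tau$, so the blow-up profile must be a $\phi$-equivariant, hence nodal, solution of the limit problem. Its energy is therefore at least $2\,\min\{J_b(U),\ \frac1n a_n^{-(n-2)/2}S^{n/2}\}$. This gives compactness below an explicit threshold $\kappa_b$.

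\textbf{Upper energy estimate.} To produce solutions we need test functions in $\cN^\phi_b$ with energy strictly below $\kappa_b$. We also need the analogous statement for $k$ different symmetry groups $G_1,\dots,G_k$, so that the resulting solutions are distinct. Distinctness follows because the solutions have different symmetry types, or by a genus/Lusternik--Schnirelmann argument on $\cN^\phi_b$.

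When $b\ge0$, the boundary term lowers the energy. In that case we construct the test functions by gluing $\pm$ copies of rescaled bubbles placed along a $G$-orbit far away, or alternatively by using a fixed nodal $\phi$-equivariant solution of \eqref{yamabe rn} truncated into $\hn$. This gives energy $<\kappa_b$. The argument then persists for $b\ge-\beta_k$ by continuity of the infima in $b$, which is where the decreasing sequence $(\beta_k)$ comes from.

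\textbf{Main obstacle.} The hardest step will be the profile decomposition together with the strict inequality $c_b^{\phi}<\kappa_b$, because the boundary nonlinearity competes with the interior one. For this step I would first try to compare with the problem in which the profile is pushed toward the boundary and show that a boundary-touching sign-changing configuration saves energy. This is the point where the sign of $b$ enters.
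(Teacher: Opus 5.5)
There is a genuine gap, and it sits exactly at the step you flag as the main obstacle: your threshold $\kappa_b$ is the wrong one, and the inequality $c^\phi_b<\kappa_b$ on which your existence argument rests is false.

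On $\hn$ both $J_b$ and $\cN^\phi_b$ are invariant under $u\mapsto\eps^{(2-n)/2}u(\cdot/\eps)$. So boundary concentration at the origin, with a $\phi$-equivariant profile solving \eqref{problem_cap}, can never be excluded by an energy bound. It is precisely how the solution you want appears: if $w$ is a minimizer, then $\eps_k^{(2-n)/2}w(\cdot/\eps_k)$ with $\eps_k\to0$ is a noncompact minimizing sequence.

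Concretely, take $u\in\cN^\phi_b$ and $g$ with $\phi(g)=-1$. Then $u^-=u^+\circ g$, hence $\|u^+\|^2=F_b(u^+)$. So $u^+$ lies in the nonsymmetric Nehari set, where $J_b(U)$ is the ground-state level, and $J_b(u)=2J_b(u^+)\ge 2J_b(U)\ge 2\min\{J_b(U),\frac1n a_n^{-(n-2)/2}S^{n/2}\}=\kappa_b$. Thus $c^\phi_b\ge\kappa_b$ for every $b$, and no choice of test functions can help.

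Your interior bound is not the relevant one either. An interior profile on the axis is a $\phi$-equivariant solution of \eqref{problem_rn}, so its energy is at least the symmetric level $\mu^\phi_\infty$. That is the only threshold that matters, not anything built from the nonsymmetric levels $J_b(U)$ and $\frac1n a_n^{-(n-2)/2}S^{n/2}$.

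The paper handles this by localizing to the half-ball $\D$ with a Dirichlet condition on the spherical part. There $\mu^\phi_b(\D)=\mu^\phi_b(\hn)$, and this value is not attained. By $(G_1)$, a minimizing sequence concentrates at a point of the axis, and after rescaling there are three cases:
\begin{itemize}
\item concentration at $0$ yields a $\phi$-equivariant minimizer of $J_b$ on $\cN^\phi_b(\hn)$, which is the good case;
\item concentration at $(0,1)$ yields a nontrivial solution in a half-space with Dirichlet condition, which is impossible;
\item interior concentration yields energy $\ge\mu^\phi_\infty$.
\end{itemize}
So what you actually need is $\mu^\phi_b(\hn)<\mu^\phi_\infty$. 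This requires an ingredient absent from your plan. By \cite[Corollary 3.4]{c}, $\mu^\phi_\infty$ is attained at some $\omega$ that is even in $x_n$. Hence $\bar\omega=\omega|_{\hn}$ solves \eqref{problem_cap} with $b=0$ and $2J_0(\bar\omega)=\mu^\phi_\infty$. Projecting $t\bar\omega$ onto $\cN^\phi_b(\hn)$ gives $J_b(t\bar\omega)<\mu^\phi_\infty$ for $b\ge0$, and for $b\ge-\beta(G,\phi)$ with $\beta$ explicit in terms of $\bar\omega$. Your continuity-in-$b$ idea would also cover small $b<0$ once the strict inequality at $b=0$ is known. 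Gluing bubbles along an orbit is not available, since orbits off the axis are infinite.

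Distinctness also needs an actual argument. With $G_{2^i},\phi_{2^i}$ and $j>i$, one has $\vr_{2^j}^{2^{j-i}}=\vr_{2^i}$ with an even exponent. So a function odd under $\vr_{2^j}$ is even under $\vr_{2^i}$, and cannot coincide with a nontrivial function odd under $\vr_{2^i}$. A genus argument on $\cN^\phi_b$ is not viable, because dilation invariance destroys the Palais--Smale condition at every nontrivial critical level. Setting $\beta_k:=\min_{0\le i<k}\beta(G_{2^i},\phi_{2^i})$ then gives the theorem.
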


Our strategy is as follows. The main difficulty in addressing problem \eqref{problem_cap} using variational methods lies in the lack of compactness produced by its invariance under dilations. To maintain control over the blow-up points we use groups of linear isometries whose fixed-point space is the $x_n$-axis and whose remaining orbits have infinite cardinality. Furthermore, we require that the group admits a surjective homomorphism $\phi$ onto the group $\{1,-1\}$. This allows us to generate functions that are sign-changing by construction, which are called $\phi$-equivariant. Such groups exist only in dimensions $n=5$ and $n\geq 7$.

Next, we introduce an auxiliary mixed boundary-value problem \eqref{problem_d} on the half-ball, which does not have a least-energy $\phi$-equivariant solution, and we analyze the behavior of minimizing sequences for it. We observe two different phenomena. Concentration can occur either on the boundary, in which case the blow-up profile is a least-energy $\phi$-equivariant solution to \eqref{problem_cap}, or it occurs in the interior of the half-ball, in which case the blow-up profile is a least-energy $\phi$-equivariant solution of the Yamabe problem \eqref{yamabe rn} in the whole space. A detailed description is given in Theorem \ref{thm:minimizing}. Furthermore, we see that for $b\geq 0$ and for $b$ negative sufficiently close to $0$ only the first behavior is possible and therefore a $\phi$-equivariant solution for problem \eqref{problem_cap} must exist; see Theorem \ref{thm:existence}. Considering different symmetries we obtain the multiplicity result.

We note that the existence of a least-energy $\phi$-equivariant solution to the Yamabe problem \eqref{yamabe rn} in $\rn$ was already proven in \cite{c}. Using a result from that paper we obtain the following improvement of Theorem \ref{thm:main_cap} for $b=0$.

\begin{theorem} \label{thm:main_n=6}
If $n\geq 5$ and $b=0$, the problem \eqref{problem_cap} has infinitely many sign-changing solutions.
\end{theorem}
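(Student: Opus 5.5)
For $b=0$ the boundary condition in \eqref{problem_cap} is the homogeneous Neumann condition $\partial u/\partial x_n=0$ on $\partial\hn$. The plan is to reduce to the Yamabe equation \eqref{yamabe rn} on all of $\rn$ by even reflection across $\{x_n=0\}$. If $v$ solves \eqref{yamabe rn} and $v(x',-x_n)=v(x',x_n)$, then its restriction to $\overline{\hn}$ solves \eqref{problem_cap} with $b=0$. Indeed, $v$ is smooth (by elliptic regularity for finite-energy solutions), and evenness forces $\partial_{x_n}v=0$ on $\partial\hn$. It therefore suffices to produce infinitely many sign-changing solutions of \eqref{yamabe rn} that are even in $x_n$ and whose restrictions to $\hn$ are pairwise distinct.

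To build these, I would use the result of \cite{c} cited just before the statement. For a closed group $G$ of linear isometries of $\rn$ and a continuous surjective homomorphism $\phi:G\to\{1,-1\}$, that result gives a least-energy $\phi$-equivariant solution of \eqref{yamabe rn}, i.e. one with $u(gx)=\phi(g)u(x)$, provided the $G$-orbits are suitably infinite (or the fixed-point set is controlled). Any such solution is sign-changing, since $u(gx)=-u(x)$ whenever $\phi(g)=-1$. I would choose $G$ to act only on the first $n-1$ coordinates, so that it commutes with the reflection $\sigma(x',x_n)=(x',-x_n)$. Then $u\circ\sigma$ is again a least-energy $\phi$-equivariant solution.

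To force evenness, I would apply \cite{c} directly to the group $\hat G=G\times\{1,\sigma\}$ with $\hat\phi(g,\sigma^j)=\phi(g)$, so that every $\hat\phi$-equivariant function is even in $x_n$. The orbits of $\hat G$ contain those of $G$, so the infinite-orbit hypothesis is inherited. Since $G$ acts only on $\r^{n-1}$, I need, for $n-1\ge4$, groups acting on $\r^{n-1}$ with surjective $\phi$ and all nonzero orbits infinite, e.g. $\r^{n-1}=\cc^2\oplus\r^{n-5}$. This is exactly where $n\ge5$ enters, and it is the step I expect to be the main obstacle: I must verify that the precise hypothesis of \cite{c} (its dimension restriction and its requirement on the fixed-point set, which here is $\{0\}\times\r^{n-5}\times\r$, not $\{0\}$) holds for $\hat G$ in every dimension $n\ge5$, including $n=6$, which Theorem \ref{thm:main_cap} excludes.

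For infinitely many solutions, I would take a sequence of groups $G_k$, e.g. by decomposing $\r^{n-1}$ differently, or by adding a factor $\z/k$ acting by rotation in a $\cc$-coordinate, so that the least-energy levels $c_k$ strictly increase or the corresponding symmetry classes differ. Then the solutions $u_k$ are pairwise distinct, and so are their restrictions to $\hn$, because $u_k$ is determined by its restriction through even reflection. Failing strict level separation, an alternative is to note that the energies of $u_k$ tend to infinity as the minimal finite orbit size grows, by a standard concentration argument, and then pass to a subsequence with distinct energies.
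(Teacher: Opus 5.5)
Your reduction is the paper's: obtain sign-changing $\phi$-equivariant solutions of \eqref{yamabe rn} from \cite{c} that are even in $x_n$, and restrict them to $\hn$, where evenness gives the Neumann condition.

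The genuine gap is the multiplicity step. You never show that the solutions obtained from different groups are different functions, and none of your three options does this.
\begin{enumerate}
\item ``The symmetry classes differ'' is not enough, since one nonzero function can be equivariant for several pairs $(G,\phi)$.
\item Strict increase of the levels $c_k$ is only asserted, and nothing in your setup gives monotonicity.
\item The orbit-size fallback does not apply. Each of your groups fixes the origin and, through $\sigma$, has two-point orbits $\{(x',\pm t)\}$ with $x'$ fixed by $G$. So the minimal nontrivial orbit size does not grow with $k$. Moreover, equivariant energies are not controlled by orbit counting, because concentration can occur at a fixed point with a sign-changing profile.
\end{enumerate}

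The missing idea is an algebraic incompatibility between the symmetries. Write $\vr_m=\cos\frac{\pi}{m}\,I+\sin\frac{\pi}{m}\,\tau$ with $\tau^2=-I$; then $\vr_\ell^{\,j}=\vr_m$ whenever $\ell=jm$. Take $m=2^i$ with $i\geq 1$. (For odd $m$ the homomorphism $\phi_m$ is not well defined, since $\vr_m^m=-I$ lies in the circle group, where $\phi_m\equiv 1$.) For $i<i'$, $\vr_{2^i}$ is an even power of $\vr_{2^{i'}}$. Hence a $\phi_{2^{i'}}$-equivariant function is $\vr_{2^i}$-invariant, while a $\phi_{2^i}$-equivariant one is $\vr_{2^i}$-anti-invariant, so no nonzero function is both; this is Lemma~\ref{lem:different}. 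Since $\vr_m$ acts only on the $\cc^2$-coordinates, it preserves $\hn$. The lemma therefore applies directly to the restrictions, and you do not need uniqueness of the even extension.

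The step you flagged as the main obstacle is also left open. Your remark that the infinite-orbit hypothesis is inherited by $\hat G=G\times\{1,\sigma\}$ is not correct. Adding $\sigma$ shrinks the fixed-point set, and the points that cease to be fixed, namely $(x',t)$ with $x'$ fixed by $G$ and $t\neq 0$, have orbits of cardinality two.

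The paper avoids analysing a new group altogether. It lets $\what G_m$ act on the $\cc^2$ factor of $\rn=\cc^2\times\r^{n-4}$ and invokes \cite[Corollary 3.4]{c}. That result already gives a $\psi_m$-equivariant solution $v_m$ with $v_m(z,y)=v_m(z,y')$ whenever $|y|=|y'|$. Since $x_n$ is one of the $y$-coordinates, evenness in $x_n$ comes for free. No condition of type $(G_1)$ on $\r^{n-1}$ is needed, which is why $n=6$ is allowed here.
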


The existence of a sign-changing solution to the problem \eqref{problem_cap} when $b < 0$ remains an interesting and challenging open problem.

To complete the picture, it is worth mentioning that, for the scalar flat problem obtained by replacing $a_n$ with $0$ and taking $b=1$ in the equations \eqref{problem_cap}, Almaraz and Wang \cite{aw} proved the existence of infinitely many sign-changing solutions for $n\geq 4$.

The paper is organized as follows. In Section \ref{sec:setting} we present the symmetric variational setting and compare the least energy level of the problem \eqref{problem_cap} with that of \eqref{yamabe rn} in that setting. In Section \ref{sec:D} we introduce the mixed boundary-value auxiliary problem, we describe the behavior of its $\phi$-equivariant minimizing sequences, and derive an abstract existence result for problem \eqref{problem_cap}. Finally, in Section \ref{sec:proof} we prove Theorems \ref{thm:main_cap} and \ref{thm:main_n=6}.

\section{The symmetric variational setting}
\label{sec:setting}

Set $p:=\frac{2n}{n-2}$ and $q:=\frac{2(n-1)}{n-2}$. Let $D^{1,2}(\hn):=\{u\in L^p(\hn):\nabla u\in L^2(\hn,\rn)\}$ with its usual norm
\begin{equation}\label{eq:norm}
\|u\|:=\Big(\ihn|\nabla u|^2\Big)^\frac{1}{2}.
\end{equation}
The solutions to the problem \eqref{problem_cap} are the critical points of the $\cC^2$-functional $J_b:D^{1,2}(\hn)\to\r$ given by
\begin{equation}\label{eq:J}
J_b(u):=\frac{1}{2}\ihn|\nabla u|^2 - \frac{a_n}{p}\ihn|u|^p - \frac{b}{q}\idhn |u|^q,
\end{equation}
whose derivative at $u$ is
\begin{equation}\label{eq:derivative}
J'_b(u)v=\ihn\nabla u\cdot\nabla v - a_n\ihn |u|^{p-2}uv - b\idhn |u|^{q-2}uv,\qquad v\in D^{1,2}(\hn).
\end{equation}
We are interested in critical points that possess a particular type of symmetries, which we describe below.

Let $G$ be a closed subgroup of the group $O(n-1)$ of linear isometries of $\r^{n-1}$ and let $\phi:G\to\mathbb{Z}_2=\{1,-1\}$ be a continuous homomorphism of groups. We denote the $G$-orbit of the point $y\in\r^{n-1}$ by
$$Gy:=\{gy:g\in G\}$$
and its cardinality by $\# Gy$. We assume that $G$ and $\phi$ have the following properties:
\begin{itemize}
\item[$(G_1)$] $\# Gy=\infty$ for every $y\in\r^{n-1}\smallsetminus\{0\}$.
\item[$(G_2)$] There exists $y_0\in\r^{n-1}$ such that $Gy_0\neq Ky_0$, where $K:=\ker\phi$.
\end{itemize}
We extend the action of $G$ to $\rn$ by setting
$$gx:=(gx',t)\qquad\text{where \ }x=(x',t)\in\r^{n-1}\times\r\equiv\rn.$$
The fixed-point space of this action is the set
$$\{x\in\rn:gx=x\text{ \ for all \ }g\in G\}=\{(0,t):t\in\r\},$$
and the $G$-orbit of every point $x\in\rn$ that does not belong to this set has infinite cardinality. A subset $X$ of $\rn$ is said to be $G$-invariant if $Gx\subset X$ for every $x\in X$, and a function $u:X\to\r$ will be called \emph{$\phi$-equivariant} if 
\begin{equation}
u(gx)=\phi(g)u(x)\quad\text{for every \ }g\in G, \ x\in X.
\end{equation}
Property $(G_2)$ implies, in particular, that the homomorphism $\phi$ is surjective. Therefore, every nontrivial $\phi$-equivariant function changes sign. 

We are interested in finding $\phi$-equivariant solutions to the problem \eqref{problem_cap}. Note that the half-space $\hn$ is $G$-invariant. Property $(G_2)$ guarantees that the space
$$D^{1,2}(\hn)^\phi:=\{u\in D^{1,2}(\hn):u\text{ \ is \ }\phi\text{-equivariant}\}$$
has infinite dimension. It is the fixed-point space of $D^{1,2}(\hn)$ under the action of $G$ given by $(gu)(x):=\phi(g)u(g^{-1}x)$. Hence, by the principle of symmetric criticality \cite[Theorem 1.28]{w}, the $\phi$-equivariant solutions to problem \eqref{problem_cap} are the critical points of the restriction of the functional $J_b$ defined above, to the space $D^{1,2}(\hn)^\phi$. Abusing notation, we denote this restriction by $J_b$.

The nontrivial critical points of $J_b:D^{1,2}(\hn)^\phi\to\r$ belong to the set
\begin{align*}
\cN_b^\phi(\hn)&:=\{u\in D^{1,2}(\hn)^\phi: u\neq 0 \text{ and }J'(u)u=0\} \\
&\,=\Big\{u\in D^{1,2}(\hn)^\phi: u\neq 0 \text{ and }\|u\|^2=F_b(u)\Big\},
\end{align*}
where 
\begin{equation}\label{eq:F}
F_b(u):=a_n\ihn |u|^p + b\idhn|u|^q.
\end{equation}

\begin{lemma}\label{lem:nehari}
\begin{itemize}
\item[$(a)$]There exists $c_0>0$ such that $\|u\|\geq c_0$ for all $u\in\cN^\phi_b(\hn)$.
\item[$(b)$]$\cN^\phi_b(\hn)$ is a Hilbert submanifold of class $\cC^2$ of $D^{1,2}(\hn)^\phi$ and a natural constraint for $J_b$.
\item[$(c)$]If $u\in D^{1,2}(\hn)\smallsetminus\{0\}$, then there exists a unique $t_u\in(0,\infty)$ such that $t_uu\in\cN^\phi_b(\hn)$. The function $t\mapsto J(tu)$ is strictly increasing in $[0,t_u]$ and strictly decreasing in $[t_u,\infty)$.
\end{itemize}
\end{lemma}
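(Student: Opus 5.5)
We prove the three parts in the order (c), (a), (b): the one-dimensional analysis along rays gives (c) directly, and it also supplies the sign information that (b) needs.

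\emph{Part (c).} Fix $u\neq 0$ and set $A:=\ihn|u|^p>0$ and $B:=\idhn|u|^q\geq 0$. Then
$$\frac{d}{dt}J_b(tu)=t\big(\|u\|^2-h(t)\big),\qquad h(t):=a_nA\,t^{p-2}+bB\,t^{q-2},$$
so $tu\in\cN^\phi_b(\hn)$ exactly when $h(t)=\|u\|^2$. Note that $2<q<p$.
\begin{itemize}
\item If $b\geq 0$ or $B=0$, then $h$ is strictly increasing, with $h(0^+)=0$ and $h(\infty)=\infty$. Hence there is exactly one solution $t_u$.
\item If $b<0$ and $B>0$, write $h(t)=t^{q-2}\big(a_nA\,t^{p-q}+bB\big)$. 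Then $h<0$ on $(0,t_0)$, $h>0$ on $(t_0,\infty)$, and $h(\infty)=\infty$, where $t_0^{p-q}=-bB/(a_nA)$. The zero of $h'$ is at $t_1^{p-q}=-\frac{(q-2)bB}{(p-2)a_nA}$, and $t_1<t_0$ because $\frac{q-2}{p-2}<1$. So $h$ decreases on $(0,t_1)$ and increases afterwards. Since $\|u\|^2>0$, the equation $h(t)=\|u\|^2$ has exactly one solution $t_u$, and it lies in $(t_0,\infty)$.
\end{itemize}
In both cases $h<\|u\|^2$ on $(0,t_u)$ and $h>\|u\|^2$ on $(t_u,\infty)$. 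By the derivative formula, $t\mapsto J_b(tu)$ is strictly increasing on $[0,t_u]$ and strictly decreasing on $[t_u,\infty)$.

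\emph{Part (a).} Let $S$ be the Sobolev constant and $C$ the trace-inequality constant, so that $\ihn|u|^p\leq S^{-p/2}\|u\|^p$ and $\idhn|u|^q\leq C\|u\|^q$ on $D^{1,2}(\hn)$. For $u\in\cN^\phi_b(\hn)$,
$$\|u\|^2=F_b(u)\leq a_nS^{-p/2}\|u\|^p+|b|C\|u\|^q.$$
Dividing by $\|u\|^2$ and using $p,q>2$, the right-hand side tends to $0$ as $\|u\|\to0$. This forces $\|u\|\geq c_0>0$ for some $c_0$ independent of $u$.

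\emph{Part (b).} Set $G(u):=\|u\|^2-F_b(u)$. This is a $\cC^2$ functional on $D^{1,2}(\hn)^\phi$ because $p,q>2$, and $\cN^\phi_b(\hn)=G^{-1}(0)\smallsetminus\{0\}$. For $u$ on the Nehari set, substituting $bB=\|u\|^2-a_nA$ gives
$$G'(u)u=2\|u\|^2-pa_nA-qbB=(2-q)\|u\|^2-(p-q)a_nA\leq(2-q)c_0^2<0,$$
using part (a). Hence $G'(u)\neq0$ on $\cN^\phi_b(\hn)$, so $0$ is a regular value there. Also, $\cN^\phi_b(\hn)$ is closed in $D^{1,2}(\hn)^\phi$, because it is bounded away from $0$ by part (a). Therefore it is a $\cC^2$ Hilbert submanifold of codimension one.

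For the natural-constraint property, let $u$ be a critical point of $J_b$ restricted to $\cN^\phi_b(\hn)$. Then $J_b'(u)=\lambda G'(u)$ on $D^{1,2}(\hn)^\phi$ for some $\lambda$. Testing with $u$ gives $0=J_b'(u)u=\lambda G'(u)u$, and since $G'(u)u<0$ this forces $\lambda=0$. So $u$ is a free critical point of $J_b$ on $D^{1,2}(\hn)^\phi$, and by the symmetric criticality principle it is a critical point on all of $D^{1,2}(\hn)$.

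The main obstacle is the case $b<0$. There $F_b$ can be negative and $h$ is not monotone, so existence and uniqueness of $t_u$ rest on the comparison $t_1<t_0$. The bound $G'(u)u<0$ must likewise be obtained by substituting the constraint, as above, rather than by a direct sign count.
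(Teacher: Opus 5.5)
Your proof is correct. It is the standard Nehari-manifold argument that the paper invokes by citing \cite[Lemma 2.1]{cpp}: the fibering-map analysis of $t\mapsto J_b(tu)$ for (c), the Sobolev and trace inequalities for (a), and for (b) the identity $G'(u)u=(2-q)\|u\|^2-(p-q)a_n\ihn|u|^p<0$ on the Nehari set together with Lagrange multipliers. One point is only implicit: in (c), $u$ must be taken in $D^{1,2}(\hn)^\phi$, as you tacitly do, since otherwise $t_uu$ cannot lie in $\cN^\phi_b(\hn)$.
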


\begin{proof}
This is proved with the same argument used in \cite[Lemma 2.1]{cpp}.
\end{proof}

Set
$$\mu^\phi_b(\hn):=\inf_{u\in\cN^\phi_b(\hn)}J_b(u).$$
Next, we compare this value with the corresponding one for the Yamabe problem in the whole space
\begin{equation}\label{problem_rn}
\begin{cases}
 -\Delta u = a_n|u|^\frac{4}{n-2}u,\\
 u\in D^{1,2}(\rn)^\phi,
\end{cases}
\end{equation}
where $D^{1,2}(\rn)^\phi:=\{u\in D^{1,2}(\rn):u\text{ \ is \ }\phi\text{-equivariant}\}$. We write $J_\infty:D^{1,2}(\rn)^\phi\to\r$ for the functional
$$J_\infty(u):=\frac{1}{2}\irn|\nabla u|^2 - \frac{a_n}{p}\irn|u|^p,$$
associated to this problem, and set 
$$\cN_\infty^\phi:=\{u\in D^{1,2}(\rn)^\phi: u\neq 0 \text{ and }J'_\infty(u)u=0\} $$ 
and
$$\mu^\phi_\infty:=\inf_{u\in\cN^\phi_\infty}J_\infty(u).$$

\begin{lemma} \label{lem:comparison}
Let $G$ and $\phi$ satisfy $(G_1)$ and $(G_2)$. Then, the following hold.
\begin{itemize}
\item[$(i)$] There exists $\bar{\omega}\in\cN^\phi_0(\hn)$ such that $\bar{\omega}$ solves \eqref{problem_cap} for $b=0$ and $2J_0(\bar{\omega})=\mu^\phi_\infty$.
\item[$(ii)$] If
\begin{equation}\label{eq:b}
b\geq -\frac{2(n-1)}{n(2n-3)}\frac{\ihn|\nabla \bar{\omega}|^2}{\idhn |\bar{\omega}|^q},
\end{equation}
then $\mu^\phi_\infty>\mu^\phi_b(\hn)$. 
\end{itemize}
\end{lemma}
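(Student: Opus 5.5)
Part $(i)$ rests on the existence result of \cite{c}: under $(G_1)$ and $(G_2)$, the Yamabe problem \eqref{problem_rn} has a least-energy $\phi$-equivariant solution $\omega$ with $J_\infty(\omega)=\mu^\phi_\infty$. The point is then to make this solution symmetric with respect to $x_n\mapsto -x_n$ and restrict it to $\hn$. Since $G$ acts only on $x'$, the reflection $\sigma(x',t)=(x',-t)$ commutes with $G$. I would therefore consider $\omega$ restricted to the $\sigma$-invariant functions, i.e. minimize $J_\infty$ over the Nehari set in $\{u\in D^{1,2}(\rn)^\phi: u\circ\sigma=u\}$. To see that the minimum is still $\mu^\phi_\infty$ there are two routes. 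One is to check that the compactness argument of \cite{c} goes through with the enlarged group $G\times\{1,\sigma\}$: the fixed points are still on the $x_n$-axis, and blow-up can be forced to the origin by translating along the axis and dilating. The other is to use the $x_n$-axis translation invariance and moving planes to show that a least-energy solution can be taken even in $x_n$ after translation.

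Given an even minimizer $\omega$, set $\bar\omega:=\omega|_{\hn}$. It satisfies $\partial_{x_n}\bar\omega=0$ on $\partial\hn$, so it solves \eqref{problem_cap} with $b=0$. It is $\phi$-equivariant and lies in $\cN^\phi_0(\hn)$. Moreover
\[
J_0(\bar\omega)=\tfrac12 J_\infty(\omega)=\tfrac12\mu^\phi_\infty .
\]
The main obstacle is this evenness step. The cleanest fix is to \emph{define} $\mu^\phi_\infty$-attainment in the even class directly, by redoing the concentration argument of \cite{c} for the group generated by $G$ and $\sigma$, with $\phi$ extended by $\phi(\sigma)=1$.

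For $(ii)$, I would test $\mu^\phi_b(\hn)$ with the projection of $\bar\omega$ onto the Nehari set. Write
\[
A:=\|\bar\omega\|^2=a_n\ihn|\bar\omega|^p,\qquad B:=\idhn|\bar\omega|^q,
\]
so that $J_0(\bar\omega)=\frac1n A=\frac12\mu^\phi_\infty$. By Lemma \ref{lem:nehari}, $\mu^\phi_b(\hn)\le \max_{t>0}J_b(t\bar\omega)$, where
\[
f(t):=J_b(t\bar\omega)=\tfrac{t^2}{2}A-\tfrac{t^p}{p}A-\tfrac{bt^q}{q}B .
\]
For $b\ge0$ we get $f\le J_0(t\bar\omega)\le \frac12\mu^\phi_\infty<\mu^\phi_\infty$ directly. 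For $b<0$ we need the maximum of $f$ to stay below $2\cdot\frac{A}{n}$. I would bound the maximizer $t_b$ crudely: $f'(t_b)=0$ gives $A(t_b-t_b^{p-1})=bBt_b^{q-1}$. Then I would estimate $f(t_b)$ using a bound of the form $f(t)\le \frac{t^2}{2}A-\frac{t^p}{p}A+\frac{|b|t^q}{q}B$ on a suitable range of $t$ (for instance $t\le 2^{1/(p-2)}$), and show the value is at most $\frac{2A}{n}$ whenever $|b|B\le \frac{2(n-1)}{n(2n-3)}A$. Strict inequality follows because equality cases cannot hold simultaneously.

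The precise constant $\frac{2(n-1)}{n(2n-3)}$ should come from optimizing this elementary one-variable inequality. If the direct estimate is not sharp enough, I would instead use the mean value form
\[
f(t_b)\le \max_t\Big(\tfrac{t^2}{2}-\tfrac{t^p}{p}\Big)A+\frac{|b|}{q}\,t_b^{\,q}B
\]
together with the bound $t_b^{p-2}\le 1+|b|B/A\cdot(\ldots)$, and verify that the constant matches.
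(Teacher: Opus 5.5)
\textbf{Gap in $(i)$.} The gap is in $(i)$, at exactly the step you flag as the main obstacle. You correctly reduce $(i)$ to producing a minimizer of $J_\infty$ on $\cN^\phi_\infty$ with $\omega(x',t)=\omega(x',-t)$, but none of your suggestions yields one.

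Moving planes relies on positivity of the solution, both to start the procedure and to compare the solution with its reflections. So it does not apply to the sign-changing $\omega$.

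Redoing the concentration argument of \cite{c} for $G\times\{1,\sigma\}$ can at best show that $\tilde{\mu}:=\inf\{J_\infty(u):u\in\cN^\phi_\infty,\ u\circ\sigma=u\}$ is attained. It says nothing about whether $\tilde{\mu}=\mu^\phi_\infty$, which is the actual content of $(i)$. Your description of that argument is also off: the enlarged group fixes only the origin, the points $(0,t)$ with $t\neq0$ have two-point orbits, and translating along the $x_n$-axis destroys $\sigma$-invariance.

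Nor can you redefine $\mu^\phi_\infty$ as $\tilde{\mu}$. Part $(ii)$ is used in Theorem \ref{thm:existence} to exclude alternative $(II)$ of Theorem \ref{thm:minimizing}, whose profile $w$ is an arbitrary $\phi$-equivariant solution in $\rn$. Only the genuine $\mu^\phi_\infty$ bounds $J_\infty(w)$ from below there.

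The missing step is the equality $\tilde{\mu}=\mu^\phi_\infty$. The paper gets it from \cite[Corollary 3.4]{c}, which provides a minimizer that is already even in $x_n$. Alternatively, you can cut and reflect a minimizer $\omega$:
pick $c$ with $\int_{\{t>c\}}|\omega|^p=\int_{\{t<c\}}|\omega|^p$, and let $\omega_\pm$ be the even reflections across $\{t=c\}$ of $\omega|_{\{\pm(t-c)>0\}}$.
Both are $\phi$-equivariant since $G$ acts only on $x'$. They satisfy $\irn|\omega_\pm|^p=\irn|\omega|^p$ and $\irn|\nabla\omega_+|^2+\irn|\nabla\omega_-|^2=2\irn|\nabla\omega|^2$. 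Hence one of them, say $\omega_+$, has $\irn|\nabla\omega_+|^2\leq\irn|\nabla\omega|^2=a_n\irn|\omega_+|^p$. Its Nehari multiple then has energy at most $\frac{1}{n}\irn|\nabla\omega|^2=\mu^\phi_\infty$. This forces $\omega_+\in\cN^\phi_\infty$ with $J_\infty(\omega_+)=\mu^\phi_\infty$. Translating by $(0,-c)$, which commutes with $G$, gives the even minimizer.

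\textbf{Part $(ii)$.} Your case $b\geq0$ is fine. For $b<0$ you only outline a strategy. Also, the constant in \eqref{eq:b} is not the optimum of a one-variable problem; it is simply what makes the paper's comparison work.

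The paper takes the $J_b$-Nehari multiple $t\bar{\omega}$ and compares it with $2J_0$. Using $At^2=At^p+bBt^q$ one gets $2J_0(t\bar{\omega})-J_b(t\bar{\omega})=t^q\big(\frac{1}{n}A\,t^{p-q}+b\,\frac{2n-3}{2n-2}B\big)$. For $b<0$ the same identity forces $t>1$, so the bracket exceeds $\frac{1}{n}A+b\,\frac{2n-3}{2n-2}B$, which is $\geq0$ by \eqref{eq:b}. Hence $\mu^\phi_\infty=2J_0(\bar{\omega})\geq 2J_0(t\bar{\omega})>J_b(t\bar{\omega})\geq\mu^\phi_b(\hn)$.

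Your direct route also closes once the computation is actually done. At the maximizer, the Nehari identity gives $f(t_b)=\frac{A}{2n(n-1)}\big(ns^{n-2}+(n-2)s^n\big)<\frac{A}{n}s^n$, where $s:=t_b^{2/(n-2)}>1$ solves $s^2-1=\beta s$ with $\beta:=|b|B/A$. Thus $s=\frac{\beta}{2}+\sqrt{1+\beta^2/4}\leq e^{\beta/2}$, and under \eqref{eq:b} we get $s^n\leq e^{(n-1)/(2n-3)}\leq e^{2/3}<2$, that is, $f(t_b)<\frac{2A}{n}=\mu^\phi_\infty$.

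So $(ii)$ is repairable along your lines, whereas $(i)$ needs the reflection step or the citation.
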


\begin{proof}
$(i):$ \ It is shown in \cite[Corollary 3.4]{c} that $\mu^\phi_\infty$ is attained at a function $\omega\in\cN^\phi_\infty$ that satisfies $\omega(x',t)=\omega(x',-t)$ for every $(x',t)\in\r^{n-1}\times\r$. Hence, $\frac{\partial\omega}{\partial t}(x',0)=0$. Therefore, its restriction $\bar{\omega}:=\omega|_{\hn}$ to $\hn$ solves \eqref{problem_cap} for $b=0$. So it belongs to $\cN_0^\phi(\hn)$ and 
$$\mu^\phi_\infty=J_\infty(\omega)=2J_0(\bar{\omega}).$$
$(ii):$ \ Set
$$\bar{c}:=\ihn|\nabla \bar{\omega}|^2 = {a_n}\ihn|\bar{\omega}|^p\qquad\text{and}\qquad \bar{d}:=\idhn |\bar{\omega}|^q$$
and let $t\in(0,\infty)$ be such that $t\bar{\omega}\in\cN^\phi_b(\hn)$. Then,
\begin{equation}\label{eq:t}
\bar{c}t^2=\bar{c}t^p+b\bar{d}t^q.
\end{equation}
It follows that
\begin{align*}
2J_0(t\bar{\omega})-J_b(t\bar{\omega})&=\frac{1}{2}\bar{c}t^2-\frac{1}{p}\bar{c}t^p+b\frac{1}{q}\bar{d}t^q=\Big(\frac{1}{2}-\frac{1}{p}\Big)\bar{c}t^p+b\Big(\frac{1}{2}+\frac{1}{q}\Big)\bar{d}t^q \\
&=t^q\Big(\frac{p-2}{2p}\bar{c}t^{p-q} + b\Big(\frac{q+2}{2q}\Big)\bar{d}\Big).
\end{align*}
If $b\geq 0$, this equality yields $2J_0(t\bar{\omega})>J_b(t\bar{\omega})$. On the other hand, if $b<0$, then \eqref{eq:t} implies that $t>1$ and using assumption \eqref{eq:b} we obtain
$$\frac{p-2}{2p}\bar{c}t^{p-q} + b\Big(\frac{q+2}{2q}\Big)\bar{d}>\frac{1}{n}\bar{c} + b\Big(\frac{2n-3}{2n-2}\Big)\bar{d}\geq0.$$
Therefore, $2J_0(t\bar{\omega})>J_b(t\bar{\omega})$ if $b$ satisfies \eqref{eq:b}. Statement $(i)$ and Lemma \ref{lem:nehari}$(c)$ yield
$$\mu_\infty^\phi=2J_0(\bar{\omega}) \geq 2J_0(t\bar{\omega}) >J_b(t\bar{\omega})\geq \mu^\phi_b(\hn),$$
as claimed.
\end{proof}

\section{An auxiliary problem}
\label{sec:D}

Throughout this section we assume that $G$ and $\phi$ are as described in Section \ref{sec:setting} and satisfy $(G_1)$ and $(G_2)$.

Let $\bn$ be the open unit ball in $\rn$ and
$$\D:=\{x\in\hn:|x|<1\}=\bn\cap\hn$$
be the open half-ball in $\hn$. This set is $G$-invariant. We denote by
$$\Gamma_0:=\{x\in \partial\D:|x|=1\}\qquad\text{and}\qquad \Gamma_1:=\partial\D\cap\partial\hn$$
the inner and outer parts of its boundary, and consider the mixed boundary-value problem

\begin{equation}\label{problem_d}
\begin{cases}
 -\Delta u = a_n|u|^\frac{4}{n-2}u &\text{in \ } \D,\\
 u=0 &\text{on \ }\Gamma_0,\\
\frac{\partial u}{\partial x_n} = -b|u|^\frac{2}{n-2}u &\text{on \ }\Gamma_1.
\end{cases}
\end{equation}
Let $V(\D)$ be the space of functions in $D^{1,2}(\D)$ whose trace vanishes on $\Gamma_0$. Note that $V(\D)\subset D^{1,2}(\hn)$ via trivial extension. Abusing notation, we denote by $J_b$ the restriction to $V(\D)$ of the functional defined in \eqref{eq:J}. Set
$$\cN_b^\phi(\D):=\cN_b^\phi(\hn)\cap V(\D)\qquad\text{and}\qquad \mu^\phi_b(\D):=\inf_{u\in\cN^\phi_b(\D)}J_b(u).$$

\begin{lemma} \label{lem:no_minimum}
$\mu^\phi_b(\D)=\mu^\phi_b(\hn)$ and $\mu^\phi_b(\D)$ is not attained by $J_b$ on $\cN_b^\phi(\D)$.
\end{lemma}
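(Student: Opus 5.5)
The equality of the two levels comes from dilation invariance, and non-attainment from a unique continuation argument.

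\emph{Step 1: $\mu^\phi_b(\D)\geq\mu^\phi_b(\hn)$.} This is immediate, since $V(\D)\subset D^{1,2}(\hn)$ via trivial extension. Trivial extension preserves $\phi$-equivariance, because $\D$ is $G$-invariant. Hence $\cN^\phi_b(\D)\subset\cN^\phi_b(\hn)$.

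\emph{Step 2: the reverse inequality.} Fix $\eps>0$ and choose $u\in\cN^\phi_b(\hn)$ with $J_b(u)<\mu^\phi_b(\hn)+\eps$. By density, approximate $u$ in $D^{1,2}(\hn)$ by functions $\vp$ with bounded support in $\overline{\hn}$; these need not vanish near $\partial\hn$. To keep equivariance, multiply by a $G$-invariant radial cutoff $\chi(|x|/R)$, which is $G$-invariant because $G$ acts by isometries. The standard estimate gives $\|\chi_R u-u\|\to0$ as $R\to\infty$, using $u\in L^p$ and Hölder on the annulus. Then $J_b$, $F_b$ and the Nehari projection $t_v$ of Lemma~\ref{lem:nehari}$(c)$ are continuous; the boundary term is controlled by the trace inequality. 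So for large $R$,
$$v:=t_{\chi_Ru}\,\chi_Ru\in\cN^\phi_b(\hn)$$
has support in $B_{2R}\cap\overline{\hn}$ and satisfies $J_b(v)<\mu^\phi_b(\hn)+2\eps$.

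Now rescale: $v_\lambda(x):=\lambda^{\frac{n-2}{2}}v(\lambda x)$ with $\lambda=2R$. It is $\phi$-equivariant, since $G$ is linear and commutes with dilations. It belongs to $V(\D)$. One checks that $\|v_\lambda\|$, $\ihn|v_\lambda|^p$ and $\idhn|v_\lambda|^q$ are all invariant; this is exactly the criticality of the exponents $p$ and $q$ ($q$ being the critical trace exponent). Hence $v_\lambda\in\cN^\phi_b(\D)$ with $J_b(v_\lambda)=J_b(v)$, which gives equality of the levels.

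\emph{Step 3: non-attainment.} Suppose $u\in\cN^\phi_b(\D)$ satisfies $J_b(u)=\mu^\phi_b(\D)=\mu^\phi_b(\hn)$. Its trivial extension $\tilde u$ lies in $\cN^\phi_b(\hn)$ and attains $\mu^\phi_b(\hn)$. By Lemma~\ref{lem:nehari}$(b)$ (natural constraint) and the principle of symmetric criticality, $\tilde u$ is a critical point of $J_b$ on $D^{1,2}(\hn)^\phi$, hence on all of $D^{1,2}(\hn)$. So $\tilde u$ is a weak solution of \eqref{problem_cap} on $\hn$. But $\tilde u\equiv0$ on the open set $\hn\smallsetminus\overline{\D}$. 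Elliptic regularity for critical problems (Brezis–Kato type, as in the positive case) gives $\tilde u\in L^\infty_{loc}$, hence $C^{2}$ in $\hn$. Writing $-\Delta\tilde u=V\tilde u$ with $V=a_n|\tilde u|^{4/(n-2)}\in L^\infty_{loc}(\hn)$, the unique continuation principle forces $\tilde u\equiv0$ in the connected set $\hn$. This contradicts $\|u\|\geq c_0$ from Lemma~\ref{lem:nehari}$(a)$.

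The main obstacle I expect is this last step: justifying enough regularity for unique continuation. Working only in the interior of $\hn$ avoids boundary issues, and local boundedness follows by the usual Moser/Brezis–Kato iteration. Alternatively, weak unique continuation for $-\Delta w=Vw$ with $V\in L^{n/2}_{loc}$ (Jerison–Kenig) applies directly without extra regularity, and I would cite that.
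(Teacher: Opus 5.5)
Your proof is correct and follows the same route as the paper. The paper likewise uses the inclusion $\cN^\phi_b(\D)\subset\cN^\phi_b(\hn)$, dilation of compactly supported elements of $\cN^\phi_b(\hn)$ into the half-ball (which leaves $J_b$ invariant), and unique continuation for the trivially extended minimizer. You additionally fill in details the paper leaves implicit: the $G$-invariant cutoff followed by Nehari projection to produce compactly supported equivariant competitors, and the observation that the extension minimizes on $\cN^\phi_b(\hn)$ and is therefore a genuine solution by the natural-constraint property and symmetric criticality.
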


\begin{proof}
As $\cN_b^\phi(\D)\subset\cN_b^\phi(\hn)$, we have that $\mu^\phi_b(\hn)\leq\mu^\phi_b(\D)$. To prove the opposite inequality, let $\vp_k\in\cN_b^\phi(\hn)\cap\cC^\infty(\overline{\hn})$ be such that $\vp_k$ has compact support and $J_b(\vp_k)\to\mu^\phi_b(\hn)$. Choose $\eps_k>0$ such that the support of $\bar{\vp}_k(x):=\eps_k^{(2-n)/2}\vp_k(\eps_k^{-1}x)$ is contained in $\overline{\D}\smallsetminus\overline{\Gamma}_0$. Then $\bar{\vp}_k\in\cN_b^\phi(\D)$ and $\mu^\phi_b(\D)\leq J_b(\bar{\vp}_k)=J_b(\vp_k)$ for all $k$. Therefore, $\mu^\phi_b(\D)\leq\mu^\phi_b(\hn)$.

If $\mu^\phi_b(\D)$ were attained by $J_b$ at some $u\in\cN_b^\phi(\D)$, the trivial extension of $u$ to $\hn$ would be a solution to \eqref{problem_cap}, contradicting the unique continuation principle.
\end{proof}

Our next goal is to describe the behavior of minimizing sequences for $J_b$ on $\cN_b^\phi(\D)$. The following lemma highlights the role of assumption $(G_1)$.

\begin{lemma}\label{lem:G}
If $G$ satisfies $(G_1)$ then, any given sequences $(\eps_k)$ in $(0,\infty)$ and $(y_k)$ in $\r^{n-1}$ contain subsequences such that one of the following statements holds:
\begin{enumerate}
\item[$(i)$] either there exists $C_0>0$ such that $\eps_{k}^{-1}|y_k|<C_0$ for all $k\in\n$,
\item[$(ii)$] or, for each $m\in\n$, there exist $g_1,\ldots,g_m\in G$ such that $\eps_{k}^{-1}|g_iy_k-g_jy_k|\to\infty$ as $k\to\infty$ for any $i\neq j$, $1\leq i,j\leq m$.
\end{enumerate}
\end{lemma}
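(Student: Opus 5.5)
If (i) fails for every choice of $C_0$, we pass to a subsequence with $\eps_k^{-1}|y_k|\to\infty$. Then we normalize and use compactness of $G$ together with $(G_1)$ to find $m$ elements that separate the rescaled points.

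\textbf{Normalization.} Assume $\eps_k^{-1}|y_k|\to\infty$; in particular $y_k\neq 0$. Write $y_k=|y_k|z_k$ with $z_k$ in the unit sphere $S^{n-2}\subset\r^{n-1}$. Passing to a further subsequence, $z_k\to z$ with $|z|=1$.

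\textbf{Choice of the $g_i$.} By $(G_1)$ the orbit $Gz$ is infinite, so we can choose $g_1,\ldots,g_m\in G$ such that the points $g_1z,\ldots,g_mz$ are pairwise distinct. Set
$$\delta:=\min_{i\neq j}|g_iz-g_jz|>0.$$

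\textbf{Estimate.} Since each $g\in G$ is a linear isometry,
$$|g_iz_k-g_jz_k|\geq |g_iz-g_jz|-|g_i(z_k-z)|-|g_j(z_k-z)|=|g_iz-g_jz|-2|z_k-z|.$$
Hence $|g_iz_k-g_jz_k|\geq \delta/2$ for all $k$ large. Consequently,
$$\eps_k^{-1}|g_iy_k-g_jy_k|=\eps_k^{-1}|y_k|\,|g_iz_k-g_jz_k|\geq \tfrac{\delta}{2}\,\eps_k^{-1}|y_k|\to\infty,$$
which is (ii).

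\textbf{Subsequence issue.} The subsequence must not depend on $m$. This is the case here: the subsequence is fixed once (by the conditions $\eps_k^{-1}|y_k|\to\infty$ and $z_k\to z$), and only the choice of $g_1,\ldots,g_m$ depends on $m$. The only point needing any care is the compactness of the unit sphere, which is used to extract the limit $z$; everything else is a direct consequence of $(G_1)$ and the isometry property.
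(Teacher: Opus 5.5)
Your proposal is correct and follows the paper's proof essentially step for step. Both pass to a subsequence with $\eps_k^{-1}|y_k|$ either bounded or tending to $\infty$, normalize $y_k/|y_k|\to z$ on the unit sphere, use $(G_1)$ to pick $g_1,\ldots,g_m$ with $g_iz$ pairwise distinct, and conclude with the isometry/triangle-inequality estimate. The only difference is cosmetic: the paper uses the margin $3d=\min_{i\neq j}|g_iz-g_jz|$ to get a lower bound $d$, where you use $\delta/2$.
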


\begin{proof}
After passing to a subsequence, we have that $\eps_k^{-1}|y_k|\to a\in[0,\infty]$. 

If $a<\infty$, then $(i)$ holds true. 

If $a=\infty$, passing to a subsequence we have that $y_k\neq 0$ for all $k\in\n$ and
$$\frac{y_k}{|y_k|}\to y\quad\text{in \ }\r^{n-1}.$$
By $(G_1)$, for any given $m\in\n$ there exist $g_1,\ldots,g_m\in G$ such that $g_iy\neq g_jy$ if $i\neq j$. Let $3d:=\min_{i\neq j}|g_iy-g_jy|$, and let $k_0\in\n$ be such that
$\big|\frac{y_k}{|y_k|}-y\big|<d$ if $k\geq k_0$. Then,
\begin{align*}
d\leq \Big|\frac{g_iy_k}{|y_k|}-\frac{g_jy_k}{|y_k|}\Big|\qquad\text{if \ }i\neq j\text{ \ and \ }k\geq k_0.
\end{align*}
Therefore, $\eps_k^{-1}|g_iy_k-g_jy_k|\geq\eps_k^{-1}|y_k|d\to\infty$ if $i\neq j$. This shows that, if $a=\infty$, then $(ii)$ holds true.
\end{proof}

\begin{theorem} \label{thm:minimizing}
Let $(u_k)$ be a sequence in $\cN_b^\phi(\D)$ such that $J_b(u_k)\to \mu^\phi_b(\D)$. Then, after passing to a subsequence, one of the following statements holds true:
\begin{itemize}
\item[$(I)$] There exists a sequence of positive numbers $(\eps_k)$ and a nontrivial solution $w\in D^{1,2}(\hn)^\phi$ to the problem \eqref{problem_cap} such that
$$\lim_{k\to\infty}\left\|u_k-\eps_k^\frac{2-n}{2}w\left(\frac{\cdot}{\eps_k}\right)\right\| = 0$$
and $J_b(w)=\mu^\phi_b(\hn)=\mu^\phi_b(\D)$.

\item[$(II)$] There is a sequence of positive numbers $(\eps_k)$, a sequence of points $\xi_k=(0,t_k)$ in $\D$ and a nontrivial solution $w\in D^{1,2}(\rn)^\phi$ to the problem \eqref{problem_rn} such that
$$\eps_k^{-1}\dist(\xi_k,\partial\D)\to\infty,\qquad\lim_{k\to\infty}\left\|u_k-\eps_k^\frac{2-n}{2}w\left(\frac{\cdot - \xi_k}{\eps_k}\right)\right\| = 0$$
and $\mu^\phi_\infty\leq J_\infty(w)=\mu^\phi_b(\D)$.
\end{itemize}
\end{theorem}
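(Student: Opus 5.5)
By Ekeland's variational principle on the $\cC^2$ manifold $\cN^\phi_b(\D)$, which is a natural constraint by Lemma~\ref{lem:nehari}$(b)$, I may assume from the start that $(u_k)$ is a Palais--Smale sequence for $J_b$ on $V(\D)^\phi$ at level $\mu^\phi_b(\D)$. By Lemma~\ref{lem:nehari}$(a)$, $(u_k)$ is bounded and bounded away from $0$. By Lemma~\ref{lem:no_minimum}, $u_k\rh 0$ weakly: a nonzero weak limit would be a $\phi$-equivariant solution of \eqref{problem_d} lying in $\cN^\phi_b(\D)$ with energy at most $\mu^\phi_b(\D)$, by weak lower semicontinuity of $(\frac12-\frac1p)\|u\|^2$-type expressions via the Brezis--Lieb splitting. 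So I am in the purely concentrating regime.

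\textbf{Step 1 (concentration scale and point).} Since $F_b(u_k)=\|u_k\|^2\geq c_0^2$, either $\int|u_k|^p$ or $\int_{\Gamma_1}|u_k|^q$ stays away from zero. I will use a Lions-type concentration function, e.g.
\[
Q_k(r):=\sup_{x\in\overline{\D}}\int_{B_r(x)\cap\D}|u_k|^p ,
\]
or its boundary analogue. Choose $\eps_k$ and $x_k=(y_k,t_k)$ with $\int_{B_{\eps_k}(x_k)\cap\D}|u_k|^p=\delta$ for a small fixed $\delta$. Since $u_k\rh0$, we get $\eps_k\to0$. Apply Lemma~\ref{lem:G} to $(\eps_k)$ and $(y_k)$. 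In alternative $(ii)$, for each $m$ there are $m$ disjoint balls $B_{\eps_k}(g_ix_k)$, each carrying mass $\delta$ by $G$-invariance of $|u_k|$. Taking $m\delta>\sup_k\int|u_k|^p$ gives a contradiction. Hence alternative $(i)$ holds, and after a bounded translation inside the scale I may take $x_k=\xi_k=(0,t_k)$ on the fixed axis.

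\textbf{Step 2 (rescaling and cases).} Set $w_k(z):=\eps_k^{(n-2)/2}u_k(\eps_kz+\xi_k)$. These are $\phi$-equivariant, since $\xi_k$ is fixed by $G$, and bounded in $D^{1,2}$. Pass to a subsequence so that $\eps_k^{-1}t_k\to\tau\in[0,\infty]$ and $\eps_k^{-1}(1-t_k)\to\sigma\in[0,\infty]$.

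The case $\sigma<\infty$, i.e.\ concentration at the Dirichlet part $\Gamma_0$ near the point $(0,1)$, must be excluded. The limit domain would be a half-space with a Dirichlet condition, on which there is no nontrivial finite-energy solution by the Pohozaev identity. Alternatively, a nonzero limit there would give a $\phi$-equivariant sign-changing Dirichlet solution in a half-space, which is impossible.

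If $\tau<\infty$, translate by $(0,t_k-\tau\eps_k)$ instead, so that the rescaled domains exhaust $\hn$. Then $w_k\rh w$ in $D^{1,2}(\hn)^\phi$, and by local compactness together with the choice of $\delta$ (smaller than the Sobolev threshold) we get $w\neq0$. Testing $J_b'(u_k)\to0$ against rescaled test functions with compact support shows that $w$ solves \eqref{problem_cap}. This is case $(I)$.

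If $\tau=\infty$ and $\sigma=\infty$, the domains exhaust $\rn$, the boundary term disappears in the limit, and $w$ solves \eqref{problem_rn}. This is case $(II)$.

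\textbf{Step 3 (energy and strong convergence).} Since $w\neq0$ is a critical point, it lies in the corresponding Nehari set. Brezis--Lieb splitting for the gradient, for $\int|\cdot|^p$ and for the trace term gives
\[
\mu^\phi_b(\D)=\lim J_b(u_k)=J(w)+\lim J(u_k-W_k)
\]
where $W_k$ is the rescaled profile, and the remainder has nonnegative energy, being asymptotically PS with energy $(\frac12-\frac1p)\|\cdot\|^2$ plus a trace contribution. In case $(I)$, $J_b(w)\geq\mu^\phi_b(\hn)=\mu^\phi_b(\D)$ by Lemma~\ref{lem:no_minimum}, which forces equality and remainder energy $0$. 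In case $(II)$, $J_\infty(w)\geq\mu^\phi_\infty$, and comparing levels again gives equality with $\mu^\phi_b(\D)$. A vanishing remainder energy at a PS sequence then yields $\|u_k-W_k\|\to0$, using Lemma~\ref{lem:nehari}$(a)$ to rule out a nonzero remainder.

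\textbf{Main obstacle.} The delicate points are twofold. First, obtaining a nonzero profile on the axis, which is where Lemma~\ref{lem:G} enters. Second, handling the sign of the trace term when $b<0$, since the remainder energy need not be obviously nonnegative. For the latter I would first try writing the remainder energy as $J-\frac1q J'$, which becomes nonnegative when $J'\to0$.
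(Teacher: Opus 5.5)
Your architecture matches the paper's: Ekeland on $\cN^\phi_b(\D)$, weak limit $0$ via Lemma~\ref{lem:no_minimum}, a concentration function, Lemma~\ref{lem:G} to move the concentration point onto the $x_n$-axis, three cases with the Dirichlet corner excluded, and Brezis--Lieb with the bound $J_b\geq\frac12\|\cdot\|^2-\frac1q F_b$ for the sign of the trace term.

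\textbf{Gap in Step~3, case $(II)$.} There the splitting only gives $\mu^\phi_b(\D)=J_\infty(w)+\lim J_b(\mathrm{remainder})\geq J_\infty(w)\geq\mu^\phi_\infty$. In case $(I)$ the remainder was forced to vanish because Lemma~\ref{lem:no_minimum} supplies $J_b(w)\geq\mu^\phi_b(\hn)=\mu^\phi_b(\D)$. In case $(II)$ the inequality $J_\infty(w)\geq\mu^\phi_\infty$ points the wrong way, and nothing you state bounds $J_\infty(w)$ below by $\mu^\phi_b(\D)$. So ``comparing levels again'' does not give zero remainder energy. Lemma~\ref{lem:nehari}$(a)$ only says a nonvanishing remainder has positive energy, which contradicts nothing while $\mu^\phi_b(\D)-J_\infty(w)$ may be positive.

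You need one more idea, and there are two ways to supply it. The paper cuts the profile off at scale $r_k=\frac12\dist(\xi_k,\partial\D)$, setting $v_k:=u_k-\eps_k^{\frac{2-n}{2}}w(\frac{\cdot-\xi_k}{\eps_k})\chi(\frac{\cdot-\xi_k}{r_k})$, so that $v_k\in V(\D)^\phi$ (your uncut $W_k$ is not in $V(\D)$). It checks $J_b'(v_k)v_k\to0$. If $v_k\not\to0$, projecting onto $\cN^\phi_b(\D)$ gives $\liminf J_b(v_k)\geq\mu^\phi_b(\D)$, hence $J_\infty(w)\leq0$. This is absurd, since $J_\infty(w)=\frac1n\|w\|^2>0$.

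Alternatively, prove $\mu^\phi_b(\D)\leq\mu^\phi_\infty$ directly. Multiply near-minimizers for $\mu^\phi_\infty$ by radial cut-offs (which preserve $\phi$-equivariance) and re-project onto $\cN^\phi_\infty$. Then translate along the $x_n$-axis and dilate so the support lies in a compact subset of $\D$. These functions lie in $\cN^\phi_b(\D)$ with $J_b=J_\infty$. With this inequality your chain closes, and it even gives $J_\infty(w)=\mu^\phi_\infty$.

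\textbf{Concentration function.} If $Q_k$ measures only $\int|u_k|^p$, the $\eps$-regularity estimate behind both ``$\eps_k\to0$'' and ``$w\neq0$'' fails for $b>0$. Testing with $\vp^2u_k$ produces $b\int_{\Gamma_1}\vp^2|u_k|^q$. This can be absorbed into $\int|\nabla(\vp u_k)|^2$ via the trace inequality only if the local trace mass $\int_{B_r(x)\cap\Gamma_1}|u_k|^q$ is also small. A purely boundary concentration function fails in the same way for interior concentration. The paper puts both masses into $Q_k$, which is what makes \eqref{eq:eps_to_0} work in every case.

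Also, $\eps_k\to0$ does not follow immediately from $u_k\rh0$. It needs this local estimate (with $u_k\to0$ in $L^2(\D)$ killing the commutator terms), a finite covering of $\D$, and \eqref{eq:kappa0}.

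Two smaller points. For $\tau<\infty$, rescale around the origin rather than $(0,t_k-\tau\eps_k)$. In case $(II)$, extend $u_k$ by even reflection across $\partial\hn$ to get a bounded sequence in $D^{1,2}(\rn)$.
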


\begin{proof}
In what follows, we consider $V(\D)\subset D^{1,2}(\hn)$ by means of trivial extension. 

Since $u_k\in\cN_b^\phi(\D)$ we have that
$$J_b(u_k)=\frac{1}{2}\ihn|\nabla u_k|^2 - \frac{a_n}{p}\ihn|u_k|^p - \frac{b}{q}\idhn |u_k|^q\geq\frac{1}{2}\|u_k\|^2 - \frac{1}{q}F_b(u_k)=\frac{q-2}{2q}\|u_k\|^2,$$
with $\|\cdot\|$ and $F_b$ as defined in \eqref{eq:norm} and \eqref{eq:F}. Since $J_b(u_k)\to \mu^\phi_b(\D)$, the sequence $(u_k)$ is bounded and, passing to a subsequence, $u_k\rh u$ weakly in $V(\D)^\phi$. Using Ekeland's variational principle, we may assume that $J'_b(u_k)\to 0$ in $V(\D)'$. Then, for every $\vp\in \cC^\infty_c(\bn)$,
$$o(1)=J'_b(u_k)\vp=\int_{\D}\nabla u_k\cdot\nabla \vp - a_n\int_{\D} |u_k|^{p-2}u_k\vp - b\int_{\partial\D} |u_k|^{q-2}u_k\vp,$$
and, passing to the limit, yields $J_b'(u)=0$ in $V(\D)'$. We claim that $u=0$. Indeed, using the Brezis-Lieb lemma we see that
\begin{align}
J_b(u_k)&=J_b(u_k-u)+J_b(u)+o(1), \label{eq:D1}\\
J'_b(u_k)u_k&=J'_b(u_k-u)[u_k-u]+J'_b(u)u+o(1)=J'_b(u_k-u)[u_k-u]+o(1),\nonumber
\end{align}
and, as a consequence,
\begin{equation}\label{eq:D2}
J_b(u_k-u)\geq\frac{1}{2}\|u_k-u\|^2-\frac{1}{q}F_b(u_k-u)=\frac{q-2}{2q}\|u_k-u\|^2+o(1).
\end{equation}
Equations \eqref{eq:D1} and \eqref{eq:D2} imply that 
\begin{equation}\label{eq:D3}
J_b(u_k)\geq J_b(u)+o(1).
\end{equation}
So, if $u\neq 0$, we would have that $u\in\cN_b^\phi(\D)$ and $J_b(u)=\mu_1(\D)$, contradicting Lemma \ref{lem:no_minimum}. This shows that $u_k\rh 0$ weakly in $V(\D)$. 

Consider the concentration function
$$Q_k(r):=\sup _{x\in\rn}\Big\{\int_{B_r(x)\cap\D}|u_k|^p+\int_{B_r(x)\cap\Gamma_1}|u_k|^q\Big\},$$
where $B_r(x)$ is the open ball of radius $r$ centered at $x$ in $\rn$. By Lemma \ref{lem:nehari}$(a)$, 
$$0<c_0\leq\|u_k\|^2=F_b(u_k)+o(1)\leq a_n\int_{\D}|u_k|^p+\max\{b,0\}\int_{\Gamma_1}|u_k|^q+o(1).$$ 
Hence, there exists $\kappa_0>0$ such that, after passing to a subsequence, 
\begin{equation}\label{eq:kappa0}
0<\kappa_0<\int_{\D}|u_k|^p+\int_{\Gamma_1}|u_k|^q\qquad\text{for all \ }k\in\n.
\end{equation}
Let $\kappa\in (0,\min\{1,\kappa_0\})$, to be fixed later. Then, there exist $\eps_k>0$ and $x_k\in\rn$ such that
\begin{equation} \label{eq:kappa}
\kappa=Q_k(\eps_k)=\int_{B_{\eps_k}(x_k)\cap\D}|u_k|^p+\int_{B_{\eps_k}(x_k)\cap\Gamma_1}|u_k|^q.
\end{equation}
We claim that $\eps_k\to 0$. 

To prove this claim, we argue by contradiction. Assume there is $0<\what{\eps}\leq\eps_k$ for all $k$ and let $\vp\in\cC^\infty_c(B_{\what{\eps}}(z))$ for some $z\in\rn$. Since $u_k\to0$ strongly in $L^2(\D)$, using \eqref{eq:kappa} we obtain
\begin{align} \label{eq:eps_to_0}
&\int_{\D}|\nabla(\vp u_k)|^2=\int_{\D}\nabla u_k\cdot\nabla(\vp^2 u_k) + o(1) \nonumber \\
&=a_n\int_{\D}\vp^2|u_k|^p+b\int_{\Gamma_1}\vp^2|u_k|^q+o(1)  \nonumber \\
&\leq C_{n,b}\Big(\int_{\D}|u_k|^{p-2}|\vp u_k|^2+\int_{\Gamma_1}|u_k|^{q-2}|\vp u_k|^2\Big)+o(1) \nonumber  \\
&\leq C_{n,b}\left[\Big(\int_{\D\cap B_{\what{\eps}}(z)}|u_k|^p\Big)^\frac{p-2}{p}\Big(\int_{\D}|\vp u_k|^p\Big)^\frac{2}{p} + \Big(\int_{\Gamma_1\cap B_{\what{\eps}}(z)}|u_k|^q\Big)^\frac{q-2}{q}\Big(\int_{\Gamma_1}|\vp u_k|^q\Big)^\frac{2}{q}\right] + o(1) \nonumber  \\
&\leq \overline{C}_{n,b} \Big[\kappa^\frac{2}{n} + \kappa^\frac{1}{n-1}\Big]\int_{\D}|\nabla(\vp u_k)|^2 + o(1).
\end{align}
So choosing $\kappa$ sufficiently small we get that $\int_{\D}|\nabla(\vp u_k)|^2 = o(1)$ and, hence, that
$$\int_{\D}|\vp u_k|^p+\int_{\Gamma_1}|\vp u_k|^q=o(1)\qquad\text{for all \ }\vp\in\cC^\infty_c(B_{\what{\eps}}(z)), \ z\in\rn.$$
Taking $z_1,\ldots,z_\ell\in\rn$ such that $\D\subset B_\frac{\what\eps}{2}(z_1)\cup\cdots\cup B_\frac{\what\eps}{2}(z_\ell)$ and $\vp_i\in\cC^\infty_c(B_{\what{\eps}}(z_i))$ such that $\vp_i(z)=1$ if $|z-z_i|\leq\frac{\what\eps}{2}$ yields
$$\int_{\D}|u_k|^p+\int_{\Gamma_1}|u_k|^q\leq\sum_{i=1}^\ell\Big(\int_{\D}|\vp_iu_k|^p+\int_{\Gamma_1}|\vp_iu_k|^q\Big)=o(1),$$
which contradicts \eqref{eq:kappa0}. This shows that $\eps_k\to 0$.

Our next task is to replace the points $x_k$ with more convenient concentration points $\xi_k$. To this end, first, we 
write $x_k=(y_k,t_k)\in\r^{n-1}\times\r$ and recall that Lemma \ref{lem:G} gives two alternatives. We claim that the alternative $(ii)$ is impossible. Indeed, arguing by contradiction, assume that, for each $m\in\n$, there exist $g_1,\ldots,g_m\in G$ such that $\eps_{k}^{-1}|g_iy_k-g_jy_k|\to\infty$ as $k\to\infty$ for any $i\neq j$. Then, for $k$ large enough, $B_{\eps_k}(g_ix_k)\cap B_{\eps_k}(g_jx_k)=\emptyset$ and, as a consequence,
\begin{equation*}
m\kappa=\sum_{i=1}^m\Big(\int_{B_{\eps_k}(g_ix_k)\cap\D}|u_k|^p+\int_{B_{\eps_k}(g_ix_k)\cap\Gamma_1}|u_k|^q\Big)\leq\int_{\D}|u_k|^p+\int_{\Gamma_1}|u_k|^q
\end{equation*}
which is impossible because $(u_k)$ is bounded in $V(\D)$. Hence, the alternative $(i)$ must hold, that is, after passing to a subsequence, there exists $C_0>0$ such that $\eps_k^{-1}|y_k|<C_0$ for all $k\in\n$. Set $\zeta_k=(0,t_k)\in\r^{n-1}\times\r$. Then, \eqref{eq:kappa} yields
\begin{equation} \label{eq:kappa1}
\kappa\leq \int_{B_{(C_0+1)\eps_k}(\zeta_k)\cap\D}|u_k|^p+\int_{B_{(C_0+1)\eps_k}(\zeta_k)\cap\Gamma_1}|u_k|^q.
\end{equation}
Now we claim that, after passing to a subsequence, there exist points $\xi_k$ and $C_1>1$ such that
\begin{equation} \label{eq:kappa2}
\kappa\leq \int_{B_{C_1\eps_k}(\xi_k)\cap\D}|u_k|^p+\int_{B_{C_1\eps_k}(\xi_k)\cap\Gamma_1}|u_k|^q
\end{equation}
and one of the following statements holds true:
\begin{enumerate}
\item $\xi_k=(0,0)$ for all $k\in\n$.
\item $\xi_k=(0,t_k)\in\D$ for all $k\in\n$, and $\eps_k^{-1}\dist(\xi_k,\partial\D)\to\infty$.
\item $\xi_k=(0,1)$ for all $k\in\n$.
\end{enumerate}
Indeed, after passing to a subsequence there are three possibilities: If $(\eps_k^{-1}|\zeta_k|)$ is bounded, we set $\xi_k:=(0,0)$. Then, \eqref{eq:kappa2} follows from \eqref{eq:kappa1}. If $\eps_k^{-1}\dist(\zeta_k,\partial\D)\to\infty$ we set $\xi_k:=\zeta_k$. Since $\dist(\zeta_k,\D)\leq(C_0+1)\eps_k$ we have that $\zeta_k\in\D$ and \eqref{eq:kappa2} is simply \eqref{eq:kappa1}. Finally, if $(\eps_k^{-1}|\zeta_k|)$ is unbounded and $(\eps_k^{-1}|\zeta_k-(0,1)|)$ is bounded we set $\xi_k:=(0,1)$ and \eqref{eq:kappa2} follows from \eqref{eq:kappa1}.

Set
$$D_k:=\eps_k^{-1}(\D-\xi_k)\qquad\text{and}\qquad\what\Gamma_k:=\eps_k^{-1}(\Gamma_1-\xi_k),$$
and define
$$w_k(z)=\eps_k^\frac{n-2}{2}u_k(\eps_kz+\xi_k)\qquad\text{if \ }z\in D_k.$$
Since $\xi_k$ is a fixed-point of $G$, the function $w_k$ is $\phi$-equivariant, and it follows from \eqref{eq:kappa} and \eqref{eq:kappa2} that
\begin{align} \label{eq:kappa3}
\kappa =\sup_{x\in\rn}\Big\{\int_{B_1(z)\cap D_k}|w_k|^p+\int_{B_1(z)\cap\what\Gamma_k}|w_k|^q\Big\}\leq \int_{B_{C_1}(0)\cap D_k}|w_k|^p+\int_{B_{C_1}(0)\cap\what\Gamma_k}|w_k|^q.
\end{align}
Furthermore, for any $\vp\in\cC^\infty_c(\rn)$, setting $\vp_k(z):=\vp\big(\frac{x-\xi_k}{\eps_k}\big)$ we see that
\begin{equation}\label{eq:derivative1}
\int_{D_k}\nabla w_k\cdot\nabla(\vp^2w_k)-a_n\int_{D_k}\vp^2|w_k|^p-b\int_{\what\Gamma_k}\vp^2|w_k|^q = J_b'(u_k)[\vp_k^2u_k]=o(1).
\end{equation}
Next, we analize the behavior of $(w_k)$ in each of the three cases stated above.

\underline{Case $(1)$}: $\xi_k=(0,0)$ for all $k\in\n$.

In this case, $\bigcup_{k\geq 1}D_k=\hn$ and $w_k\in D^{1,2}(\hn)^\phi$. Since the sequence $(w_k)$ is bounded in $D^{1,2}(\hn)$, a subsequence satisfies $w_k\rh w$ weakly in $D^{1,2}(\hn)$, $w_k\to w$ a.e. in $\hn$, $w_k\to w$ in $L^2_{loc}(\hn)$, and $w$ is $\phi$-equivariant. We claim that $w\neq 0$. Otherwise, we get a contradiction as follows: Let $\vp\in\cC^\infty_c(B_1(z))$ for some $z\in\rn$. Note that $\supp(\vp)\cap\hn\subset D_k$ for every large enough $k$. So following the argument in \eqref{eq:eps_to_0}, this time using \eqref{eq:kappa3} and \eqref{eq:derivative1}, we conclude that 
$$\ihn|\vp w_k|^p+\idhn|\vp w_k|^q = o(1)\qquad\text{for every \ }\vp\in\cC^\infty_c(B_1(z)), \ z\in\rn.$$
Taking $z_1,\ldots,z_\ell\in\rn$ such that $B_{C_1}(0)\subset B_\frac{1}{2}(z_1)\cup\cdots\cup B_\frac{1}{2}(z_\ell)$ and $\vp_i\in\cC^\infty_c(B_1(z_i))$ such that $\vp_i(z)=1$ if $|z-z_i|\leq\frac{1}{2}$ we obtain
$$\int_{B_{C_1}(0)\cap \hn}|w_k|^p+\int_{B_{C_1}(0)\cap\partial\hn}|w_k|^q\leq\sum_{i=1}^\ell\Big(\ihn|\vp_iw_k|^p+\idhn|\vp_iw_k|^q\Big)=o(1),$$
which contradicts \eqref{eq:kappa3}. This shows that $w\neq 0$.

Furthermore, if $\vp\in\cC^\infty_c(\rn)$, then $\supp(\vp)\cap\hn\subset D_k$ for all large enough $k$, and setting $\vp_k(z):=\vp\big(\frac{x-\xi_k}{\eps_k}\big)$ we see that
\begin{equation*}
\ihn\nabla w_k\cdot\nabla\vp-a_n\ihn|w_k|^{p-2}w_k\vp-b\idhn|w_k|^{q-2}w_k\vp = J_b'(u_k)[\vp_k|_{\D}]=o(1).
\end{equation*}
Passing to the limit yields $J_b'(w)=0$ in $D^{1,2}(\hn)'$, that is, $w$ is a nontrivial $\phi$-equivariant solution to \eqref{problem_cap}. Now, using the Brezis-Lieb lemma and arguing  as we did at the beginning of the proof, we see that
\begin{align}\label{eq:H1}
\mu^\phi_b(\D)+o(1)&=J_b(w_k)=J_b(w_k-w)+J_b(w)+o(1)\geq J_b(w_k-w)+\mu^\phi_b(\hn)+o(1), \\
J_b(w_k-w)&\geq\frac{1}{2}\|w_k-w\|^2-\frac{1}{q}F_b(w_k-w)=\frac{q-2}{2q}\|w_k-w\|^2+o(1),
\end{align}
and, using Lemma \ref{lem:no_minimum}, we obtain  
$$\lim_{k\to\infty}\left\|u_k-\eps_k^\frac{2-n}{2}w\left(\frac{\cdot}{\eps_k}\right)\right\|=\lim_{k\to\infty}\|w_k-w\| = 0$$
and $J_b(w)=\mu^\phi_b(\D)=\mu^\phi_b(\hn)$. Hence, in this case, statement $(I)$ holds true.

\underline{Case $(2)$}: $\xi_k=(0,t_k)\in\D$ for all $k\in\n$, and $\eps_k^{-1}\dist(\xi_k,\partial\D)\to\infty$.

In this case, $\bigcup_{k\geq 1}D_k=\rn$. Consider the extension operator $D^{1,2}(\hn)\to D^{1,2}(\rn)$,  $v\mapsto v^*$, where
\begin{equation*}
v^*(x',t):=
\begin{cases}
v(x',t) &\text{if \ }t>0,\\
v(x',-t) &\text{if \ }t<0,
\end{cases}
\end{equation*}
and define $\what w_k\in D^{1,2}(\rn)$ by
$$\what w_k(z):=\eps_k^\frac{n-2}{2}u_k^*(\eps_kz+\xi_k).$$
Note that $\what w_k$ is $\phi$-equivariant and $\what w_k(z)=w_k(z)$ if $z\in D_k$. Since $(\what w_k)$ is bounded in $D^{1,2}(\rn)$, a subsequence satisfies $\what w_k\rh w$ weakly in $D^{1,2}(\rn)$, $\what w_k\to w$ a.e. in $\rn$, $\what w_k\to w$ in $L^2_{loc}(\rn)$, and $w$ is $\phi$-equivariant. If $\vp\in\cC^\infty_c(B_1(z))$ for some $z\in\rn$, then $\supp(\vp)\subset D_k$ for every large enough $k$ and following the argument in \eqref{eq:eps_to_0}, using \eqref{eq:kappa3} and \eqref{eq:derivative1}, we see that, if $w=0$, then
$$\irn|\vp w_k|^p = o(1)\qquad\text{for every \ }\vp\in\cC^\infty_c(B_1(z)), \ z\in\rn.$$
Since $B_{C_1}(0)\subset D_k$ for sufficiently large $k$, arguing as in Case (1) we  obtain
$$\int_{B_{C_1}(0)}|w_k|^p=o(1).$$
This contradicts \eqref{eq:kappa3} and shows that $w\neq 0$.

If $\vp\in\cC^\infty_c(\rn)$, then $\supp(\vp)\subset D_k$ for all large enough $k$, and setting $\vp_k(z):=\vp\big(\frac{x-\xi_k}{\eps_k}\big)$ we get that
\begin{align*}
\irn\nabla\what w_k\cdot\nabla\vp-a_n\irn|\what w_k|^{p-2}w_k\vp &=\irn\nabla w_k\cdot\nabla\vp-a_n\irn|w_k|^{p-2}w_k\vp \\
& = J_b'(u_k)[\vp_k]=o(1).
\end{align*}
Passing to the limit yields $J_\infty'(w)=0$ in $D^{1,2}(\rn)'$. Hence, $w$ is a nontrivial $\phi$-equivariant solution to \eqref{problem_rn}. 

Fix a radial cut-off function $\chi\in\cC_c^\infty(\rn)$ such that $\chi(z)=1$ if $|z|\leq 1$ and $\chi(z)=0$ if $|z|\geq 2$ and set
$$\chi_k(z):=\chi\Big(\frac{\eps_k z}{r_k}\Big)\qquad\text{where \ }r_k:=\frac{1}{2}\dist(\xi_k,\partial\D).$$
Note that $w\chi_k\in D^{1,2}_0(D_k)$ and
\begin{align*}
\irn|\nabla(w(\chi_k-1)|^2 &\leq C\left(\int_{|z|\geq\frac{r_k}{\eps_k}}|\nabla w|^2 + \Big(\frac{\eps_k}{r_k}\Big)^2\int_{\frac{r_k}{\eps_k}\leq|z|\leq\frac{2r_k}{\eps_k}}w^2\right) \\
&\leq C\left(\int_{|z|\geq\frac{r_k}{\eps_k}}|\nabla w|^2 + \Big(\int_{|z|\geq\frac{r_k}{\eps_k}}w^p\Big)^\frac{2}{p}\right) = o(1).
\end{align*}
Therefore,
\begin{align*}
\int_{D_k}|\nabla(w_k-w\chi_k)|^2 &=\int_{D_k}|\nabla w_k|^2 - 2\int_{D_k}\nabla w_k\cdot\nabla(w\chi_k) + \int_{D_k}|\nabla(w\chi_k)|^2  \\
&=\int_{D_k}|\nabla w_k|^2 - 2\irn\nabla \what w_k\cdot\nabla(w\chi_k) + \int_{D_k}|\nabla(w\chi_k)|^2  \\
&=\int_{D_k}|\nabla w_k|^2 - \irn|\nabla w|^2 + o(1).
\end{align*}
It is also easy to see that
$$\int_{D_k}|w_k-w\chi_k|^p = \int_{D_k}|w_k|^p - \irn|w|^p + o(1).$$
As a consequence,
\begin{align}\label{eq:J(v)}
J_b(u_k)&=\frac{1}{2}\int_{D_k}|\nabla w_k|^2-\frac{a_n}{p}\int_{D_k}|w_k|^p - \frac{b}{q}\int_{\partial\D}|u_k|^q \nonumber \\
&=\frac{1}{2}\int_{D_k}|\nabla(w_k-w\chi_k)|^2-\frac{a_n}{p}\int_{D_k}|\nabla(w_k-w\chi_k)|^p - \frac{b}{q}\int_{\partial\D}|v_k|^q \nonumber\\
&\qquad+ \frac{1}{2}\irn|\nabla w|^2-\frac{a_n}{p}\irn|w|^p + o(1) \nonumber\\
&=J_b(v_k) + J_\infty(w) + o(1),
\end{align}
where
$$v_k(x):=u_k(x)-\eps_k^\frac{2-n}{2}w\Big(\frac{x-\xi_k}{\eps_k}\Big)\chi\Big(\frac{x-\xi_k}{r_k}\Big).$$
Similarly,
\begin{equation}\label{eq:J'(v)}
o(1)=J_b'(u_k)u_k=J_b'(v_k)v_k + J_\infty'(w)w + o(1)=J_b'(v_k)v_k + o(1).
\end{equation}
Note that $v_k$ is $\phi$-equivariant. It follows from \eqref{eq:J'(v)} that $v_k\to 0$ strongly in $D^{1,2}(\hn)$, as, otherwise, $J_b(v_k)+o(1)\geq\mu_b^\phi(\D)$, contradicting \eqref{eq:J(v)}. Therefore, 
$$\lim_{k\to\infty}\left\|u_k-\eps_k^\frac{2-n}{2}w\Big(\frac{\cdot - \xi_k}{\eps_k}\Big)\right\| = \lim_{k\to\infty}\|v_k\| = 0$$
and $\mu^\phi_b(\D)=J_\infty(w)\geq \mu^\phi_\infty$. Hence, in this case, we obtain statement $(II)$.

\underline{Case $(3)$}: $\xi_k=(0,1)$ for all $k\in\n$.

In this case, $\bigcup_{k\geq 1}D_k=\mathbb{H}:=\{(x',t)\in\r^{n-1}\times\r:t<1\}$ and the functions 
$$\what w_k(z):=\eps_k^\frac{n-2}{2}u_k^*(\eps_kz+\xi_k)$$
belong to $D^{1,2}_0(\mathbb{H})$. A subsequence satisfies $\what w_k\rh w$ weakly in $D^{1,2}(\rn)$, $\what w_k\to w$ a.e. in $\rn$, $\what w_k\to w$ in $L^2_{loc}(\rn)$ and $w\in D^{1,2}_0(\mathbb{H})$. If $\vp\in\cC^\infty_c(B_1(z))$ for some $z\in\rn$, then $\supp(\vp)\subset \eps_k^{-1}(\hn-\xi_k)=\{(x',t)\in\r^{n-1}\times\r:t>-\frac{1}{\eps_k}\} $ for large enough $k$ and, arguing as in Case (2), we show that $w\neq 0$.

On the other hand, if $\vp\in\cC_c^\infty(\mathbb{H})$, then $\supp(\vp)\subset D_k$ for large enough $k$, and setting $\vp_k(z):=\vp\big(\frac{x-\xi_k}{\eps_k}\big)$ we obtain
\begin{equation*}
\int_\mathbb{H}\nabla w_k\cdot\nabla\vp-a_n\int_\mathbb{H}|w_k|^{p-2}w_k\vp = J_b'(u_k)[\vp_k]=o(1).
\end{equation*}
Passing to the limit shows that $w$ is a nontrivial solution to the problem 
\begin{equation*}
\begin{cases}
 -\Delta u = a_n|u|^\frac{4}{n-2}u,\\
 u\in D^{1,2}_0(\mathbb{H}),
\end{cases}
\end{equation*}
in the half-space $\mathbb{H}$. It is well known that this problem does not have a nontrivial solution. Therefore, Case (3) cannot occur.

This completes the proof.
\end{proof}

As a consequence we obtain the following result.

\begin{theorem} \label{thm:existence}
Let $G$ and $\phi$ satisfy $(G_1)$ and $(G_2)$. If $b$ satisfies \eqref{eq:b} then the problem \eqref{problem_cap} has a least energy $\phi$-equivariant solution.
\end{theorem}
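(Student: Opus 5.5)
The plan is to combine the dichotomy of Theorem \ref{thm:minimizing} with the strict energy comparison of Lemma \ref{lem:comparison}$(ii)$. First I produce a minimizing sequence on the half-ball. Lemma \ref{lem:no_minimum} gives $\mu^\phi_b(\D)=\mu^\phi_b(\hn)$, and $\cN^\phi_b(\D)$ is nonempty. Indeed, $D^{1,2}(\hn)^\phi$ is infinite dimensional by $(G_2)$, so it contains nontrivial $\phi$-equivariant functions with compact support. Lemma \ref{lem:nehari}$(c)$ projects such a function onto the Nehari set, and a dilation then moves its support into $\overline{\D}\smallsetminus\overline{\Gamma}_0$. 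Hence there is a sequence $(u_k)$ in $\cN^\phi_b(\D)$ with $J_b(u_k)\to\mu^\phi_b(\D)$, and Theorem \ref{thm:minimizing} applies to it.

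After passing to a subsequence, alternative $(I)$ or $(II)$ of Theorem \ref{thm:minimizing} holds. I rule out $(II)$ using the hypothesis \eqref{eq:b}. In case $(II)$ we would have $\mu^\phi_\infty\le J_\infty(w)=\mu^\phi_b(\D)=\mu^\phi_b(\hn)$. Since $b$ satisfies \eqref{eq:b}, Lemma \ref{lem:comparison}$(ii)$ gives the strict inequality $\mu^\phi_b(\hn)<\mu^\phi_\infty$, which is a contradiction.

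Therefore $(I)$ holds, and it yields a nontrivial $\phi$-equivariant solution $w\in D^{1,2}(\hn)^\phi$ of \eqref{problem_cap} with $J_b(w)=\mu^\phi_b(\hn)$. Being a nontrivial solution, $w$ satisfies $J_b'(w)w=0$, so $w\in\cN^\phi_b(\hn)$ and it attains the infimum. This makes $w$ a least energy $\phi$-equivariant solution. By $(G_2)$, $\phi$ is surjective, so $w$ changes sign.

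All the real work (the concentration analysis and the exclusion of boundary-point concentration at $\Gamma_0$) has already been done in Theorem \ref{thm:minimizing}. The only delicate point here is that the inequality in Lemma \ref{lem:comparison}$(ii)$ is strict: excluding $(II)$ requires exactly $\mu^\phi_b(\hn)<\mu^\phi_\infty$, not $\le$, and this strictness is what the hypothesis \eqref{eq:b} secures.
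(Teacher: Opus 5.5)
Your proposal is correct and follows the paper's own proof. Like the paper, you take a minimizing sequence in $\cN^\phi_b(\D)$, use Lemma \ref{lem:no_minimum} together with the strict inequality of Lemma \ref{lem:comparison}$(ii)$ to exclude alternative $(II)$ of Theorem \ref{thm:minimizing}, and read off the least energy solution from $(I)$. Your check that $\cN^\phi_b(\D)$ is nonempty is simply a detail the paper leaves implicit.
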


\begin{proof}
Let $(u_k)$ be a sequence in $\cN_b^\phi(\D)$ such that $J_b(u_k)\to \mu^\phi_b(\D)$. By Lemmas \ref{lem:no_minimum} and \ref{lem:comparison}, $\mu^\phi_b(\D)=\mu^\phi_b(\hn)<\mu^\phi_\infty$. Hence statement $(I)$ in Theorem \ref{thm:minimizing} must be true. In particular, there exists a solution $w\in D^{1,2}(\hn)^\phi$ to the problem \eqref{problem_cap} such that
$J_b(w)=\mu^\phi_b(\hn)$.
\end{proof}

\section{Multiple nodal solutions}
\label{sec:proof}

We begin by giving some examples of $G$ and $\phi$ that satisfy $(G_1)$ and $(G_2)$.

For $n\geq 5$ we write $\r^{n-1}\equiv\cc^2\times\r^{n-5}$ and a point in $\r^{n-1}$ as $(z,y)$ with $z=(z_1,z_2)\in\cc\times\cc$ and $y\in\r^{n-5}$. 

For each $m\in\n$ we consider the group $\what G_m$ of linear isometries of $\cc^2$ generated by $\mathbb{S}^1\cup\{\vr_m\}$, where $\mathbb{S}^1$ is the group of unit complex numbers acting by 
\begin{equation*}
\zeta z:=(\zeta z_1,\overline{\zeta}z_2),\quad\text{for \ }\zeta\in \mathbb{S}^1, \ \ z=(z_1,z_2)\in\cc\times\cc,
\end{equation*}
and $\vr_m$ is given by 
\begin{equation*}\label{eq:rho}
\vr_mz:=\left(\Big(\cos\frac{\pi}{m}\Big)z+\Big(\sin\frac{\pi}{m}\Big)\tau z\right)\qquad\text{for \ }z\in\cc^2,
\end{equation*}
where $\tau(z_1,z_2):=(-\overline{z}_2,\overline{z}_1)$ and $\overline{\zeta}$ is the complex conjugate of $\zeta$. The $\what G_m$-orbit of any point $z\in\cc^2\smallsetminus\{0\}$ is a finite union of circles, so it has infinite cardinality.

Let $O(n-5)$ be the group of linear isometries of $\r^{n-5}$ and let $G_m:=\what G_m\times O(n-5)$ act on $\r^{n-1}$ by
\begin{equation} \label{eq:action}
gx':=(\gamma z,\theta y),
\end{equation}
where $g=(\gamma,\theta)\in \Gamma_m\times O(n-5)$ and $x'=(z,y)\in\cc^2\times\r^{n-5}$. Then, the $G_m$-orbit of $x'$ is 
$$G_mx'=\what G_mz\times S^{n-6}_{|y|},$$
where $S^{k-1}_r$ is the sphere of radius $r$ in $\r^k$, centered at the origin. Note that $G_m$ satisfies $(G_1)$ if and only if $n\neq 6$.

Now we define $\phi_m:G_m\to\mathbb{Z}_2=\{1,-1\}$ to be the homomorphism of groups given by
$$\phi_m(\gamma)=1 \text{ \ if \ }\gamma\in\mathbb{S}^1,\qquad \phi_m(\vr_m)=-1,\qquad\phi_m(\theta)=1\text{ \ if \ }\theta\in O(n-5).$$
Since $\vr_m$ has order $2m$ this homomorphism is well defined and it clearly satisfies $(G_2)$.

We note the following.

\begin{lemma}\label{lem:different}
If $\ell=jm$ with $j$ even and $u,v:\hn\to\r$ are nontrivial functions that satisfy
$$u(\vr_\ell x)=-u(x)\quad\text{and}\quad v(\vr_m x)=-v(x)\quad\text{for every \ }x\in\hn,$$
then $u\neq v$.
\end{lemma}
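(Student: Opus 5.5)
The plan is to argue by contradiction: assume $u=v$ and show that $u$ must then vanish identically. The key observation is that $\vr_\ell^{\,j}=\vr_m$ as linear maps of $\cc^2$, extended to $\rn$ by acting trivially on the $y$- and $t$-coordinates.

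\emph{Step 1.} First we verify that $\vr_\ell^{\,j}=\vr_m$. On $\cc^2\equiv\r^4$ the map $\tau$ is a real linear isometry with $\tau^2=-\mathrm{id}$. Indeed,
$$\tau^2(z_1,z_2)=\tau(-\overline{z}_2,\overline{z}_1)=(-z_1,-z_2).$$
Hence $\tau$ is a complex structure, and $\vr_\ell=\cos\frac{\pi}{\ell}\,\mathrm{id}+\sin\frac{\pi}{\ell}\,\tau$ behaves like $e^{i\pi/\ell}$ in the commutative real algebra generated by $\mathrm{id}$ and $\tau$. The de Moivre formula therefore gives
$$\vr_\ell^{\,j}=\cos\frac{j\pi}{\ell}\,\mathrm{id}+\sin\frac{j\pi}{\ell}\,\tau=\cos\frac{\pi}{m}\,\mathrm{id}+\sin\frac{\pi}{m}\,\tau=\vr_m.$$
I expect this to be the only point requiring care. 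One must check that $\tau$ is real linear (it is, although it is not complex linear) and that $\tau^2=-\mathrm{id}$, as computed above.

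\emph{Step 2.} Next we derive the contradiction. Iterating the relation $u(\vr_\ell x)=-u(x)$ $j$ times gives
$$u(\vr_m x)=u(\vr_\ell^{\,j}x)=(-1)^j u(x)=u(x),$$
because $j$ is even. If $u=v$, the hypothesis on $v$ also gives $u(\vr_m x)=-u(x)$. Combining the two, $u(x)=-u(x)$, so $u\equiv 0$ on $\hn$, which contradicts nontriviality. Hence $u\neq v$.

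This argument works pointwise. If the functions are only defined almost everywhere, the same argument gives $u=0$ a.e., which still contradicts nontriviality.
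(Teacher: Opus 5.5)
Your proof is correct and follows the paper's argument: assume $u=v$, use $\vr_\ell^{\,j}=\vr_m$ with $j$ even to get $u(\vr_m x)=u(x)$, and combine this with $u(\vr_m x)=-u(x)$ to force $u\equiv 0$. The only difference is that you verify $\vr_\ell^{\,j}=\vr_m$ explicitly, via $\tau^2=-\mathrm{id}$ and de Moivre's formula, whereas the paper simply asserts it.
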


\begin{proof}
Arguing by contradiction, assume that $u=v$. Since $\vr_\ell^j=\vr_m$ and $j$ is even, we have that 
$$u(\vr_mx)=u(\vr_\ell^jx)=u(x)=v(x)=-v(\vr_mx)=-u(\vr_mx)$$
for every $x\in\hn$. This implies that $u$ and $v$ are trivial, contradicting our assumption.
\end{proof}
\medskip

\begin{proof}[Proof of Theorem \ref{thm:main_cap}]
For each $m$, we apply Lemma \ref{lem:comparison} to $G_m$ and $\phi_m$, as defined above, to obtain a 
function $\bar{\omega}_m\in\cN^{\phi_m}_0(\hn)$ such that $2J_0(\bar{\omega}_m)=\mu^{\phi_m}_\infty$, and we set
$$\beta(G_m,\phi_m):=\frac{2(n-1)}{n(2n-3)}\frac{\ihn|\nabla \bar{\omega}_m|^2}{\idhn |\bar{\omega}_m|^q}.$$
For each $k\in\n$ we define
$$\beta_k:=\min_{i=0,\ldots,k-1}\beta(G_{2^i},\phi_{2^i}).$$

If $b\geq -\beta_k$, then $b$ satisfies \eqref{eq:b} for each $G_{2^i}$, $\phi_{2^i}$ with $i=0,\ldots,k-1$, and Theorem \ref{thm:existence} yields a least energy $\phi_{2^i}$-equivariant solution $w_{2^i}$ to the problem \eqref{problem_cap}. This solution changes sign by construction. 
By Lemma \ref{lem:different}, all functions $w_1,\,w_2,\ldots,w_{2^{k-1}}$ are different.
\end{proof}
\medskip

\begin{proof}[Proof of Theorem \ref{thm:main_n=6}]
For each $m\geq 5$, we consider the action of the group $\what G_m$, defined above, on $\rn$ given by
\begin{equation*}
gx:=(\gamma z,y),\qquad\text{for all \ }\gamma\in\what G_m,\quad x=(z,y)\in\cc^2\times\r^{n-4}.
\end{equation*}
Let $\psi_m:\what G_m\to\z2$ be the restriction of $\phi_m$ to $\what G_m$. Then, \cite[Corollary 3.4]{c} establishes the existence of a $\psi_m$-equivariant solution $v_m\in D^{1,2}(\rn)$ to the Yamabe problem \eqref{yamabe rn} which satisfies $v_m(z,y)=v_m(z,y')$ if $|y|=|y'|$. This implies, in particular, that $\frac{\partial v_m}{\partial x_n}(x)=0$ for every $x\in\partial\hn$. Therefore, the restriction of $v_m$ to $\hn$ is a sign-changing solution to \eqref{problem_cap} with $b=0$. By Lemma \ref{lem:different}, all functions $v_1,\,v_2,\ldots,v_{2^k},\ldots$ are different.
\end{proof}

\bibliography{escobar}

@article{aw,
  author  = {Almaraz, Sérgio and Wang, Shaodong},
  title   = {Energy bounds of sign-changing solutions to {Y}amabe equations on manifolds with boundary},
  journal = {Nonlinear Analysis},
  volume  = {225},
  year    = {2022},
  pages   = {Paper No. 113131},
}

@article{aubin,
  author  = {Aubin, Thierry},
  title   = {Equations différentielles non linéaires et problème de {Y}amabe concernant la courbure scalaire},
  journal = {Journal de Mathématiques Pures et Appliquées (9)},
  volume  = {55},
  number  = {3},
  year    = {1976},
  pages   = {269--296},
}

@article{b,
  author  = {Beckner, William},
  title   = {Sharp {S}obolev inequalities on the sphere and the {M}oser--{T}rudinger inequality},
  journal = {Annals of Mathematics (2)},
  volume  = {138},
  number  = {1},
  year    = {1993},
  pages   = {213--242},
}

@article{c,
  author  = {Clapp, Mónica},
  title   = {Entire nodal solutions to the pure critical exponent problem arising from concentration},
  journal = {Journal of Differential Equations},
  volume  = {261},
  number  = {6},
  year    = {2016},
  pages   = {3042--3060},
}

@article{cfs,
  author  = {Clapp, Mónica and Faya, Jorge and Saldaña, Alberto},
  title   = {Optimal pinwheel partitions for the {Y}amabe equation},
  journal = {Nonlinearity},
  volume  = {37},
  number  = {10},
  year    = {2024},
  pages   = {Paper No. 105004},
}

@book{d,
  author    = {Deimling, Klaus},
  title     = {Ordinary Differential Equations in {B}anach Spaces},
  series    = {Lecture Notes in Mathematics},
  volume    = {596},
  publisher = {Springer-Verlag},
  address   = {Berlin--New York},
  year      = {1977},
}

@article{dmpp1,
  author  = {del Pino, Manuel and Musso, Monica and Pacard, Frank and Pistoia, Angela},
  title   = {Large energy entire solutions for the {Y}amabe equation},
  journal = {Journal of Differential Equations},
  volume  = {251},
  number  = {9},
  year    = {2011},
  pages   = {2568--2597},
}

@article{dmpp2,
  author  = {del Pino, Manuel and Musso, Monica and Pacard, Frank and Pistoia, Angela},
  title   = {Torus action on $\mathbb{S}^n$ and sign-changing solutions for conformally invariant equations},
  journal = {Annali della Scuola Normale Superiore di Pisa, Classe di Scienze (5)},
  volume  = {12},
  number  = {1},
  year    = {2013},
  pages   = {209--237},
}

@article{ding,
  author  = {Ding, Wei Yue},
  title   = {On a conformally invariant elliptic equation on $\mathbb{R}^n$},
  journal = {Communications in Mathematical Physics},
  volume  = {107},
  number  = {2},
  year    = {1986},
  pages   = {331--335},
}

@article{e1,
  author  = {Escobar, José F.},
  title   = {The {Y}amabe problem on manifolds with boundary},
  journal = {Journal of Differential Geometry},
  volume  = {35},
  number  = {1},
  year    = {1992},
  pages   = {21--84},
}

@article{e4,
  author  = {Escobar, José F.},
  title   = {Conformal deformation of a {R}iemannian metric to a constant scalar curvature metric with constant mean curvature on the boundary},
  journal = {Indiana Univ. Math. J.},
  volume  = {45},
  number  = {4},
  year    = {1996},
  pages   = {917–943},
}

@article{fp,
  author  = {Fernández, Juan Carlos and Petean, Jimmy},
  title   = {Low energy nodal solutions to the {Y}amabe equation},
  journal = {Journal of Differential Equations},
  volume  = {268},
  number  = {11},
  year    = {2020},
  pages   = {6576--6597},
}

@article{hl0,
  author  = {Han, Zheng-Chao and Li, YanYan},
  title   = {The {Y}amabe problem on manifolds with boundary: Existence and compactness results},
  journal = {Duke Mathematical Journal},
  volume  = {99},
  number  = {3},
  year    = {1999},
  pages   = {489--542},
}

@article{hl,
  author  = {Han, Zheng-Chao and Li, YanYan},
  title   = {The existence of conformal metrics with constant scalar curvature and constant boundary mean curvature},
  journal = {Communications in Analysis and Geometry},
  volume  = {8},
  number  = {4},
  year    = {2000},
  pages   = {809--869},
}

@article{h,
  author  = {Henry, Guillermo},
  title   = {Second {Y}amabe constant on Riemannian products},
  journal = {Journal of Geometry and Physics},
  volume  = {114},
  year    = {2017},
  pages   = {260--275},
}

@article{k,
  author  = {Kulpa, Władysław},
  title   = {The {P}oincaré--{M}iranda theorem},
  journal = {American Mathematical Monthly},
  volume  = {104},
  number  = {6},
  year    = {1997},
  pages   = {545--550},
}

@article{mm,
  author  = {Medina, Maria and Musso, Monica},
  title   = {Doubling nodal solutions to the {Y}amabe equation in $\mathbb{R}^n$ with maximal rank},
  journal = {Journal de Mathématiques Pures et Appliquées (9)},
  volume  = {152},
  year    = {2021},
  pages   = {145--188},
}

@article{p,
  author  = {Petean, Jimmy},
  title   = {On nodal solutions of the {Y}amabe equation on products},
  journal = {Journal of Geometry and Physics},
  volume  = {59},
  number  = {10},
  year    = {2009},
  pages   = {1395--1401},
}

@book{w,
  author    = {Willem, Michel},
  title     = {Minimax Theorems},
  series    = {Progress in Nonlinear Differential Equations and their Applications},
  volume    = {24},
  publisher = {Birkhäuser Boston},
  address   = {Boston, MA},
  year      = {1996},
}

@article{s,
  author  = {Struwe, Michael},
  title   = {A global compactness result for elliptic boundary value problems involving limiting nonlinearities},
  journal = {Mathematische Zeitschrift},
  volume  = {187},
  number  = {4},
  year    = {1984},
  pages   = {511–517},
}

@article{cpp,
  author  = {Clapp, Mónica and Pellacci, Benedetta and Pistoia, Angela},
  title   = {Sign-changing solutions to the {Y}amabe problem on manifolds with boundary},
  journal = {Journal or the London Mathematical Society},
  year  = {to appear},
}
\bibliographystyle{amsplain}

\end{document}